\newtheorem{theorem}{Theorem}
\newtheorem{lemma}[theorem]{Lemma}
\newtheorem{proposition}[theorem]{Proposition}
\newtheorem{remark}[theorem]{Remark}
\newenvironment{proof}[1][Proof]{\textbf{#1.}}{\ \rule{0.5em}{0.5em}}%
\begin{document}
\parindent 9mm
\title{Output Feedback Exponential Stabilization for {\color{blue} a 1-d Wave PDE with Dynamic Boundary}
\thanks{This work was supported by the Natural Science Foundation of Shaanxi Province
2018JM1051, 2014JQ1017. }
\thanks{2020 Mathematics Subject Classification. 37L15; 93D15; 93B51; 93B52.}}
\author{ Zhan-Dong Mei
\thanks{School of Mathematics and Statistics, Xi'an Jiaotong
University, Xi'an 710049, China.  Email: zhdmei@mail.xjtu.edu.cn. } }


\date{}
\maketitle \thispagestyle{empty}
\begin{abstract}
%
%
We study the output feedback exponential stabilization for a 1-d wave PDE with dynamic boundary.
With only one measurement, we construct an infinite-dimensional state observer to trace the state and design an estimated state based controller to exponentially stabilize the original system. This is an essentially important improvement for the existence literature [\"{O}. Morg\"{u}l, B.P. Rao and F. Conrad, IEEE Transactions on Automatic Control, 39(10) (1994), 2140-2145] where two measurements including the high order angular velocity feedback were adopted. When a control matched nonlinear internal uncertainty and external disturbance are taken into consideration, we construct an infinite-dimensional extended state observer (ESO) to estimate the total disturbance and state simultaneously. By compensating the total disturbance, an estimated state based controller is designed to exponentially stabilize the original system while making the closed-loop system bounded. Riesz basis approach is crucial to the verifications of the exponential stabilities of two coupled systems of the closed-loop systems. Some numerical simulations are presented to illustrate the effectiveness.
\vspace{0.5cm} 

%
%
\noindent {\bf Key words:} Exponential stabilization, disturbance, extended state observer, Riesz basis, tip mass.

\end{abstract}


\section{Introduction}\label{section1}

In this paper, we are concerned with the output feedback exponential stabilization of a 1-d wave PDE with dynamic boundary described as follows
\begin{equation} \label{beem}
\left\{\begin{array}{l}
u_{tt}(x,t)=u_{xx}(x,t),\;\; x\in (0,1), \; t>0, \\
u(0,t)=0,u_{x}(1,t)+mu_{tt}(1,t)=U(t)+F(t),  t\ge 0,\\
y(t)=\{u_{x}(0,t),u(1,t)\}, \;\;  t\ge 0,
\end{array}\right.
\end{equation}
where $x$ and $t$ denote the independent spatial and time variables, respectively, $u(x, t)$ denotes the cable displacement at $x$ for time $t$,
$m>0$ is the tip mass at the actuator end, $U(t)$ is the boundary control (or input), $F(t)=f(u(\cdot,t),u_t(\cdot,t))+d(t)$,
$f:H^1(0,1)\times L^2(0,1)\rightarrow \mathds{R}$ is the internal uncertainty, $d(t)$ is the external disturbance, $y(t)$ is the observation (output).
System (\ref{beem}) can be used to describe flexible cable with tip mass \cite{Guo2000,He2012,Morgul1994}.

When there is no disturbance ($F(t)\equiv 0$), the velocity feedback control law $U(t)=-\alpha u_t(1,t)$
$(\alpha>0)$ is sufficient for the strong stabilization but not sufficient for the exponential stabilization of the system (\ref{beem}), see \cite{Lee1987}.
By virtue of energy multiplier approach, Morg\"{u}l et al. \cite{Morgul1994} proved that the output feedback control law $U(t)=-\alpha u_t(1,t)-a u_{xt}(1,t)(\alpha,a>0)$ exponentially stabilize the right system. Here the angular velocity $u_{xt}(1,t)$ was adopted,
which is quite different from the case $m=0$ where velocity feedback is sufficient for the exponential stability \cite{Rideau1985}.
Moreover, in the special case $\alpha=m/a\neq 1$, Morg\"{u}l et al. \cite{Morgul1994} verified the Riesz basis generation of the closed-loop system,
and thereby the spectrum-determined growth condition is satisfied.
In \cite{Guo2000}, Guo and Xu showed by virtue of essential spectral analysis that the spectrum-determined growth {\color{blue}condition always holds} for the closed-loop system under output feedback control law $U(t)=-\alpha u_t(1,t)-a u_{xt}(1,t)(\alpha,a>0)$.
However, the Riesz basis generation for the general case is still unsolved. This will be presented in Lemma \ref{Astable}.

Observe that the aforemention literatures used static collocated feedback where the actuators and sensors are located in the same end $x=1$.
The reason why energy multiplier method is successfully used for the stability analysis is that such output feedback makes the system dissipative.
However, it has known for a long time, the performance of closed-loop system under collocated output feedback may be not so good \cite{Cannon1984}.
For the non-collocated setting, sensors can be placed according to the performance requirement,
thereby such design approach for specific systems has been widely used \cite{Cannon1984,Liu2003,Tian2016}.
Since the non-collocated closed-loop system is usually non-dissipative, the well-posedness as well as stability analysis is most difficult.
Dynamic boundary feedback approach may be a better choice to
overcome the difficulty stemmed from non-collocated feedback.
In \cite{Guo2007}, with one measurement $u_{x}(0,t),$
an infinite-dimensional observer-based feedback technique has been employed to construct a stabilizing boundary
feedback controller for a wave equation (\ref{beem}) with $m=0$ and $F(t)=0$.
Such method has been applied to solve non-collocated boundary control designs of Euler-Bernoulli beam equation \cite{Guo2008}.
{\it For (\ref{beem}) with tip mass ($m\neq0$), non-collocated control is still a long standing unsolved problem.}
The reason is that tip mass makes the system much more complex; such system is indeed a hybrid system consists of a PDE and an ODE. One of our task in this paper is to construct a non-collocated output feedback stabilizing controller for system (\ref{beem}) without disturbance. Compared to the existence references \cite{Guo2000,Morgul1994},
{\it our main contributions for the case $F(t)\equiv 0$ are: 1) non-collocated feedback is used, 2) only one measurement is employed,
3) the Riesz basis generations for closed-loop systems in \cite{Guo2000,Morgul1994} have been generalized to the case $m\neq a$,
4) Riesz basis approach is used to prove the exponential stability for a coupled system which is equivalent to the closed-loop system obtained from our feedback control law, where the verification of exponential stabilization as well as Riesz basis generation for coupled equation is most changeable \cite{Guo2007,Guo2008,Guo2019}.}

Uncertainties and disturbances widely exist in various practical engineering control systems.
When the disturbance is taking into consideration ($F(t)\neq 0$), the vibration cable model (\ref{beem}) is quite different from the previous work,
because it is governed by a nonhomogeneous hyperbolic PDE. Even a small disturbance can damage
the stabilizing output feedback design for system without disturbance. Thus, anti-disturbance problem is most changeable: in order
to stabilize the system with disturbance, the controller should be redesigned. Many
engineers and mathematicians focus on developing various approaches to deal with uncertainty and disturbance
in control system problem. In \cite{Morgul1994a} and \cite{Morgul2001}, in order to reject the disturbance under
certain conditions, Morg\"{u}l proposed a kind of dynamic boundary controller for elastic beam and string, respectively.
Guo et al. \cite{GuoW2013a} and Ge et al. \cite{Ge2011b} used adaptive control method to stabilize the wave equation and beam equation with disturbance, respectively. Sliding-mode control (SMC) approach was adopted to reject the disturbance for wave equations \cite{Guo2013a}, beam equation \cite{Guo2013b,Karagiannis2018a}, and the cascade of ODE-wave systems \cite{Liu2017}.
Backstepping approach developed by Krstic and Smyshlyaev \cite{Krstic2006,Krstic2008,Krstic2008b,Krstic2009a,KrsticSmysh2008,Smyshlyaev2005}
is also helpful for reject disturbance of PDEs \cite{Auriol2020,Deutscher2017,Deutscher2019,Deutscher2020,Wang2020} {\color{blue}and output regulation for coupled linear wave-ODE systems (similar to our setup) \cite{Deutscher2020b}}. It should be noted that the active disturbance rejection control (ADRC) proposed by Han \cite{Han2009} is another powerful approach to reject disturbance. The core idea of ADRC is estamaite/cancellation in real time and the key step of control strategy is the construction of an extended state observer (ESO) for the estimating of the state and uncertainty. In the earlier works for PDEs by ADRC like  \cite{Guo2013b,Jin2015,Su2020}, they dealt with the disturbance by ODEs reduced from the associated PDEs through some special test functions.
Therefore, the ESO is of finite-dimension; slow variation, high gains and boundedness of the derivation of the disturbance were used in the ESO.
In \cite{Feng2017a}, Feng and Guo developed a new infinite-dimensional ESO to relax such restricts of the conventional ESO for a class of anti-stable wave equations with external disturbance. Later on, the same method was used to study the stabilization of other wave equations \cite{Mei2020b,Zhou2017a,Zhou2018b} and Euler-Bernoulli beams \cite{Zhou2018a,Zhou2020}.

In section \ref{disturbancestab}, we shall clarify that the estimated state based output feedback control law (\ref{control}) for system (\ref{beem}) with $F(t)\equiv 0$ is not robust to the total disturbance. For system (\ref{beem}) with $f(w,w_t)=0$ and $d(t)$ being bounded, He and Ge \cite{He2012} adopted
adaptive control approach to compensate the uncertainty. In the case $f(w,w_t)=0$ and $m=a\alpha$, by
constructing a finite-dimensional ESO, Xie and Xu \cite{Xie2017} employed active disturbance rejection control approach to cope with the disturbance. However, they excluded the internal uncertainty as well as the general case $m\neq a\alpha$. Moreover, as in \cite{Morgul1994}, the author in \cite{Xie2017} used two measurements including the high order angular velocity feedabck $w_{xt}(1,t)$. In this paper, with two low order measurements $w_{x}(0,t)$ and $w(1,t)$, we shall design an infinite-dimensional ESO to estimate the original state and total disturbance online. By compensating the total disturbance, an
estimated state based output feedback control law is constructed in order to exponentially stabilize the original system. Moreover, all the other states of the closed-loop system are verified to be bounded. {\it The main contribution for the system (\ref{beem}) with disturbance ($F(t)\neq 0$) lies in that, 1) the internal uncertainty is considered, 2) the high order angular velocity feedback $w_{xt}(1,t)$ is avoided, 3) we consider the case $m\neq a,\alpha>0$ while \cite{Xie2017} just solved the special case $\alpha,a>0$ and $m=a\alpha$, 4) our control strategy can be also applied to simplify the existence references \cite{Mei2020b,Zhou2018a,Zhou2018b}.}

Consider system (\ref{beem}) in the energy state Hilbert space $\mathbf{H}_1=H_E^1(0,1)\times L^2(0,1)\times\mathds{C},\ H_E^1(0,1)=\{f|f\in H^1(0,1),f(0)=0\}$, with the following norm
$\|(f,g,$
$\eta)\|^2_{\mathbf{H}_1} = \int_0^1[|f'(x)|^2+|g(x)|^2]dx+\frac{1}{m} |\eta|^2,(f,g,\eta)\in \mathbf{H}_1.$
Define the operator $\mathbf{A}_1:D(\mathbf{A}_1)(\subset \mathbf{H}_1)\rightarrow \mathbf{H}_1$ by
 $\mathbf{A}_1(f,g,\eta)=(g,f'',-f'(1)),$
 $ \forall\;(f,g)\in D(\mathbf{A}_1)=\{(f,g,\eta)\in \mathbf{H}_1| (g,f'',-f'(1))\in \mathbf{H}_1,
   \eta=mg(1)\}.$
Obviously, $\mathbf{A}_1$ is skew-adjoint and generates a unity group.
System (\ref{beem}) is written abstractly by
\begin{align*}
    &\frac{d}{dt}\left(u(\cdot,t),u_t(\cdot,t),mu_t(1,t)\right)\\
&=\mathbf{A}_1\left(u(\cdot,t),u_t(\cdot,t),mu_t(1,t)\right)+\mathbf{B}_1[U(t)+F(t)],
\end{align*}
where $\mathbf{B}_1=(0,0,1)^T$ is a bounded linear operator on $\mathbf{H}_1$. The following
proposition can be derived directly from the proof of \cite[Proposition 1.1]{Zhou2018a}.

\begin{proposition}
Assume that $f:H^1(0,1)\times L^2(0,1)\rightarrow \mathds{R}$ is continuous and satisfies global Lipschitz condition in $H^1(0,1)\times L^2(0,1)$.
Then, for any {\color{blue}$(u(\cdot,0),u_t(\cdot,0),mu_t(1,$
$0))\in \mathbf{H}_1$}, $u,d \in L^2_{loc}(0,\infty)$,
there exists a unique global solution (mild solution) to (\ref{beem}) such that $(u(\cdot,t),u_t(\cdot,t),$
$mu_t(1,t))\in C(0,\infty;\mathbf{H}_1)$.
\end{proposition}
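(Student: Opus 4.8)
The plan is to recast \dref{beem} as the abstract semilinear Cauchy problem
\[
\dot z(t)=\mathbf{A}_1 z(t)+\mathbf{B}_1\big[U(t)+f(u(\cdot,t),u_t(\cdot,t))+d(t)\big],\qquad z(0)=(u(\cdot,0),u_t(\cdot,0),mu_t(1,0)),
\]
with $z(t)=(u(\cdot,t),u_t(\cdot,t),mu_t(1,t))$, and to solve the corresponding integral (mild) equation
\[
z(t)=e^{\mathbf{A}_1 t}z(0)+\int_0^t e^{\mathbf{A}_1(t-s)}\mathbf{B}_1\big[U(s)+f(u(\cdot,s),u_t(\cdot,s))+d(s)\big]\,ds
\]
by the Banach contraction principle in $C([0,T];\mathbf{H}_1)$. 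Here I would use that $\mathbf{A}_1$, being skew-adjoint, generates a $C_0$-group of isometries, so $\|e^{\mathbf{A}_1 t}\|=1$ for all $t$; that $\mathbf{B}_1=(0,0,1)^T$ is a bounded operator on $\mathbf{H}_1$; and that the superposition map $(f_1,g_1,\eta)\mapsto f(f_1,g_1)$ is globally Lipschitz from $\mathbf{H}_1$ to $\mathds{R}$ with some constant $L_f$, since $f$ is globally Lipschitz on $H^1(0,1)\times L^2(0,1)$ and the projection $\mathbf{H}_1\to H_E^1(0,1)\times L^2(0,1)$ is bounded.

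First I would check that, for fixed $T>0$, the right-hand side above defines a map $\mathcal F$ on $C([0,T];\mathbf{H}_1)$: the forcing term $\int_0^t e^{\mathbf{A}_1(t-s)}\mathbf{B}_1[U(s)+d(s)]\,ds$ is a well-defined element of $C([0,T];\mathbf{H}_1)$ because $U,d\in L^2_{loc}(0,\infty)\subset L^1_{loc}(0,\infty)$ and $\mathbf{B}_1$ is bounded (the integrand is Bochner integrable and continuity in $t$ follows from strong continuity of the group and dominated convergence), and once $z\in C([0,T];\mathbf{H}_1)$ the map $s\mapsto f(u(\cdot,s),u_t(\cdot,s))$ is continuous, hence integrable. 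For two trajectories $z_1,z_2$, the Lipschitz estimate on $f$ together with $\|e^{\mathbf{A}_1 t}\|=1$ gives $\|\mathcal F z_1-\mathcal F z_2\|_{C([0,T];\mathbf{H}_1)}\le L_f\|\mathbf{B}_1\|\,T\,\|z_1-z_2\|_{C([0,T];\mathbf{H}_1)}$, so $\mathcal F$ is a contraction once $T<(L_f\|\mathbf{B}_1\|)^{-1}$, producing a unique local mild solution; since this $T$ does not depend on the data, the solution extends step by step over $[0,\infty)$.

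Finally, to confirm $z\in C(0,\infty;\mathbf{H}_1)$ with uniform-in-finite-time bounds I would derive an a priori estimate. Using $\|e^{\mathbf{A}_1 t}\|=1$ and $|f(f_1,g_1)|\le|f(0,0)|+L_f\|(f_1,g_1)\|$ in the integral equation yields
\[
\|z(t)\|_{\mathbf{H}_1}\le\|z(0)\|_{\mathbf{H}_1}+\|\mathbf{B}_1\|\int_0^t\big(|U(s)|+|d(s)|+|f(0,0)|+L_f\|z(s)\|_{\mathbf{H}_1}\big)\,ds,
\]
and Gronwall's inequality bounds $\|z(t)\|_{\mathbf{H}_1}$ on every finite interval in terms of $\|z(0)\|_{\mathbf{H}_1}$, $\|U\|_{L^1(0,t)}$ and $\|d\|_{L^1(0,t)}$, which are finite since $L^2_{loc}\subset L^1_{loc}$. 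Hence no finite-time blow-up occurs and the solution is global. The only mildly delicate point is the bookkeeping that makes the convolution term continuous in $t$ while $U,d$ are merely $L^2_{loc}$; this is routine precisely because $\mathbf{B}_1$ is a bounded control operator, so no admissibility or trace estimates are needed, and the argument is exactly the semilinear well-posedness scheme in the proof of \cite[Proposition 1.1]{Zhou2018a}.
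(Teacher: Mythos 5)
Your proof is correct and is essentially the argument the paper intends: the paper offers no proof of its own but derives the proposition "directly from the proof of \cite[Proposition 1.1]{Zhou2018a}", which is precisely your abstract semilinear scheme (skew-adjoint $\mathbf{A}_1$ generating a unitary group, bounded input operator $\mathbf{B}_1$, globally Lipschitz superposition map, contraction mapping on $C([0,T];\mathbf{H}_1)$ followed by stepwise extension and a Gronwall bound). Nothing further is needed.
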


We proceed as follows. In section \ref{section2}, for system (\ref{beem}) without disturbance, we design a Luenberger state observer and an estimated state based stabilizing control law.  We design in section \ref{disturbancestab} an infinite-dimensional ESO for system (\ref{beem}) to estimated total disturbance and state in real time. An estimated total disturbance and estimated state based stabilizing control law is then designed. Moreover, it is proved that an important couple subsystem including the original equation of the closed-loop is exponential stability and the closed-loop system is bounded. In section \ref{shiyan}, some numerical simulations are presented.

\section{Stabilization in the absence of disturbance}\label{section2}

We rewrite (\ref{beem}) with $F(t)\equiv0$ as follows
\begin{equation} \label{beem1}
\left\{\begin{array}{l}
u_{tt}(x,t)=u_{xx}(x,t),\;\; x\in (0,1), \; t>0, \\
u(0,t)=0,u_{x}(1,t)+mu_{tt}(1,t)=U(t), \;  t\ge 0,\\
y(t)=u_x(0,t), \;\;  t\ge 0.
\end{array}\right.
\end{equation}
Our aim in this section is to design a stabilizing control law for system (\ref{beem1}) by virtue of only one noncollocated measurement $u_{x}(0,t)$.
To this end, we first construct an infinite-dimensional Luenberger state observer as follows
\begin{equation} \label{transfer11}
\left\{\begin{array}{l}
\widehat{u}_{tt}(x,t)=\widehat{u}_{xx}(x,t),\;\; x\in (0,1), \; t>0, \\
\widehat{u}_x(0,t)=\gamma\widehat{u}_{t}(0,t)+\beta\widehat{u}(0,t)+u_x(0,t), \\
\widehat{u}_{x}(1,t)+m\widehat{u}_{tt}(1,t)=U(t), \;\;  t\ge 0,
\end{array}\right.
\end{equation}
where $\beta,\gamma>0$.
Set $\widetilde{u}(x,t)=\widehat{u}(x,t)-u(x,t)$ to get
\begin{equation} \label{wwan}
\left\{\begin{array}{l}
\widetilde{u}_{tt}(x,t)=\widetilde{u}_{xx}(x,t),\;\; x\in (0,1), \; t>0, \\
\widetilde{u}_x(0,t)=\gamma\widetilde{u}_{t}(0,t)+\beta\widetilde{u}(0,t), \; \;   t\ge 0,\\
\widetilde{u}_{x}(1,t)+m\widetilde{u}_{tt}(1,t)=0, \;\;  t\ge 0.
\end{array}\right.
\end{equation}
Denote $\mathbf{H}_2=H^1(0,1)\times L^2(0,1)\times \mathds{C}$, with norm $\|(f,g,\eta)\|^2_{\mathbf{H}_2}=\int_0^1|f'(x)|^2+|g(x)|^2
+\beta|f(0)|^2+|\eta|^2/m,$
$(f,g,\eta)\in \mathbf{H}_2.$
Then we can write (\ref{wwan}) abstractly by
\begin{align}\label{wwansemigroup}
    \nonumber&\frac{d}{dt}(\widetilde{u}(\cdot,t),\widetilde{u}_t(\cdot,t),m\widetilde{u}_t(1,t))\\
    &=\mathbf{A}_2(\widetilde{u}(\cdot,t),
    \widetilde{u}_t(\cdot,t),m\widetilde{u}_t(1,t)).
\end{align}
Here $\mathbf{A}_2:D(\mathbf{A}_2)(\subset \mathbf{H}_2)\rightarrow \mathbf{H}_2$ is defined by
$\mathbf{A}_2(f,$
$g,\eta)=(g,f'',-f'(1)),\; \forall\;(f,g,\eta)\in D(\mathbf{A}_2)=\{(f,g,$
$\eta)\in \mathbf{H}_2|(g,f'',-f'(1))\in \mathbf{H}_2,
    f'(0)=\gamma g(0)+\beta f(0),$
    $    \eta=mg(1))\}.$
By \cite{Rao1993}, $\mathbf{A}_2$ generates an exponentially stable $C_0$-semigroup,
{\color{blue}that is, $\|e^{\mathbf{A}_2t}\|\leq M_{\mathbf{A}_2}e^{\omega_{\mathbf{A}_2}t},\; t\geq 0$,
where $M_{\mathbf{A}_2}$ and $\omega_{\mathbf{A}_2}$ are two positive constants.}
The following Lemma \ref{A2stable} presents that the Riesz basis property of $\mathbf{A}_2$ also holds.

\begin{lemma}\label{A2stable}
Assume that $\gamma\neq 1$. Then, there exist a sequence of generalized eigenfunctions of operator $(\mathbf{A}_2,D(\mathbf{A}_2))$ that forms Riesz basis for $\mathbf{H}_2$; the eigenvalues $\{\lambda_n\}_{n=-\infty}^{+\infty}$ have
asymptotic expression
$\lambda_n=\frac{1}{2}{\rm ln}\frac{|\gamma-1|}{\gamma+1}+n_\beta\pi i+O(|n|^{-1}),$
where
$n_\beta$
$=n$ if $\gamma>1$ and $n_\beta=n+1/2$ if $0<\gamma<1$;
the eigenfunction is given by $(f_n,\lambda_nf_n,$
$m\lambda_nf(1))$ with $f_n(x)=[(1+\gamma)\lambda_n+\beta]e^{\lambda_n x}+[(1-\gamma)\lambda_n-\beta]e^{-\lambda_n x}$ and
$F_n(x)$
$=(f_{n}^{'}(x),\lambda_nf_n(x),\beta f_n(0),-\lambda_n^{-1} f'_n(1))/\lambda_n^2=
     \big((1+\gamma)$
     $\left|\frac{\gamma-1}{\gamma+1}\right|^{x/2}e^{in_\beta x}-(1-\gamma)\left|\frac{\gamma+1}{\gamma-1}\right|^{x/2}e^{-in_\beta x},
     (1+\gamma)\left|\frac{\gamma-1}{\gamma+1}\right|^{x/2}$
     $e^{in_\beta x}+(1-\gamma)\left|\frac{\gamma+1}{\gamma-1}\right|^{x/2}e^{-in_\beta x},
   0,0\big)+O(n^{-1}).$
\end{lemma}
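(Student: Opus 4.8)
The plan is to carry out an explicit spectral analysis of $\mathbf{A}_2$ and then invoke the classical ``quadratically close to a Riesz basis'' criterion. First I would make the eigenvalue problem explicit: writing $\Phi=(f,g,\eta)$, the identity $\mathbf{A}_2\Phi=\lambda\Phi$ forces $g=\lambda f$, $f''=\lambda^2 f$, the Robin-type condition $f'(0)=(\gamma\lambda+\beta)f(0)$, and (from $\eta=m\lambda f(1)$ and $-f'(1)=\lambda\eta$) the condition $f'(1)+m\lambda^2 f(1)=0$. Substituting $f(x)=c_1e^{\lambda x}+c_2e^{-\lambda x}$ and imposing the condition at $x=0$ gives $(c_1,c_2)$ proportional to $\big((1+\gamma)\lambda+\beta,\ (1-\gamma)\lambda-\beta\big)$, which is the $f_n$ in the statement; the condition at $x=1$ then yields the characteristic equation
\[
\Delta(\lambda):=\big[(1+\gamma)\lambda+\beta\big](1+m\lambda)e^{2\lambda}-\big[(1-\gamma)\lambda-\beta\big](1-m\lambda)=0 .
\]

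Next I would locate the zeros of $\Delta$. Dividing by the leading coefficient $(1+\gamma)m\lambda^2$ shows that $e^{2\lambda}=\frac{\gamma-1}{\gamma+1}+O(|\lambda|^{-1})$ as $|\lambda|\to\infty$; since $\gamma\neq1$ the right-hand side converges to a nonzero finite constant, which confines all but finitely many eigenvalues to a vertical strip and, via Rouch\'e's theorem on small circles about the zeros of $e^{2\lambda}=\frac{\gamma-1}{\gamma+1}$ (these being exactly $\frac12\ln\left|\frac{\gamma-1}{\gamma+1}\right|+n_\beta\pi i$, with $n_\beta=n$ if $\gamma>1$ and $n_\beta=n+\frac12$ if $0<\gamma<1$), produces the stated expansion $\lambda_n=\frac12\ln\frac{|\gamma-1|}{\gamma+1}+n_\beta\pi i+O(|n|^{-1})$ together with algebraic simplicity and uniform separation of $\lambda_n$ for $|n|$ large. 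This is exactly where $\gamma\neq1$ is essential: when $\gamma=1$ one instead gets $e^{2\lambda}=O(|\lambda|^{-1})\to0$, so the eigenvalues escape to $-\infty$ and no Riesz basis can exist.

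I would then insert $\lambda_n$ into $f_n$, normalize by $\lambda_n^2$, and read off the asymptotics of the eigenfunctions in a bounded-invertible realization of $\mathbf{H}_2$ inside $(L^2(0,1))^2\times\mathbb{C}^2$. Using $f_n(0)=2\lambda_n$ and, crucially, the fact that on the spectrum $(1+\gamma)e^{\lambda_n}+(1-\gamma)e^{-\lambda_n}=O(|n|^{-1})$ (a rearrangement of the characteristic equation), one finds that the last two coordinates are $O(|n|^{-1})$ and the first two converge to the functions $E_n$ displayed in the statement with error $O(|n|^{-1})$; hence $\sum_n\|F_n-E_n\|_{\mathbf{H}_2}^2<\infty$. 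Applying the bounded invertible maps $(u,v)\mapsto(u+v,\,v-u)$ on the first two coordinates and then the bounded invertible weights $e^{\mp a x}$ (with $a=\frac12\ln\left|\frac{\gamma-1}{\gamma+1}\right|$; here $\gamma\neq1$ is used once more to keep $1-\gamma$ bounded away from $0$) transforms $\{E_n\}$ into the exponential system $\{e^{\lambda_n x}\}$ in two-component form, which — since $\{\lambda_n\}$ is a small perturbation of $\{a+n_\beta\pi i\}$ — is, after adjoining a basis of the finite-dimensional span of the generalized eigenfunctions of the finitely many exceptional eigenvalues, a Riesz basis of $\mathbf{H}_2$ by the standard theory of nonharmonic Fourier series. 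Since $\mathbf{A}_2$ has compact resolvent and its generalized eigenfunctions are complete in $\mathbf{H}_2$, Bari's theorem then upgrades quadratic closeness to the Riesz basis property, and the exponential-stability estimate already quoted from \cite{Rao1993} is re-obtained from ${\rm Re}\,\lambda_n\to a<0$.

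The step I expect to be the main obstacle is reconciling the vanishing of the last two coordinates of the limit vectors $E_n$ — so that $\{E_n\}$ alone is not complete in $\mathbf{H}_2$ — with the requirement of a genuine Riesz basis of the whole space: one must take the comparison system to be $\{E_n\}_{|n|>N}$ augmented by finitely many vectors, verify that the low-frequency generalized eigenfunctions of $\mathbf{A}_2$ genuinely span the missing finite-dimensional directions, and establish completeness/$\omega$-independence of the full family (e.g.\ through compactness of the resolvent together with absence of residual spectrum, or a direct orthogonality argument). Carrying out this book-keeping, and controlling the Rouch\'e estimates uniformly in $n$, is the delicate part.
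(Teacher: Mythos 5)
Your proposal is correct and follows essentially the same route as the paper: explicit computation of the eigenpairs and the characteristic equation, Rouch\'e's theorem applied to $e^{2\lambda}=\frac{\gamma-1}{\gamma+1}+O(|\lambda|^{-1})$ for the eigenvalue asymptotics, normalization of the eigenfunctions to exhibit quadratic closeness to the reference system $\{P(x)(e^{in\pi x},e^{-in\pi x})\}$ augmented by two constant vectors, and a Bari-type argument (the paper invokes Lemma~1 and Theorem~1 of the cited Guo--Yu reference) to handle the finitely many exceptional modes and conclude the Riesz basis property. The book-keeping issue you flag at the end --- that the limit vectors have vanishing last two coordinates and the comparison family must be completed by finitely many vectors whose span is recovered from low-frequency generalized eigenfunctions of the discrete operator $\mathbf{A}_2$ --- is exactly what the paper's appeal to those two results resolves.
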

\begin{proof}\ \
It follows from \cite{Rao1993} that operator $(\mathbf{A}_2,D(\mathbf{A}_2))$ generates an exponentially stable $C_0$-semigroup,
and it is a densely defined and discrete operator on $\mathbf{H}_2$. Therefore, for any $\lambda\in \sigma(\mathbf{A}_2)=\sigma_P(\mathbf{A}_2)$, ${\rm Re} \lambda<0$. We divide the proof into several steps.

{\bf Step 1:} We claim that for any $\lambda\in\sigma(\mathbf{A}_2)$, there corresponds one eigenfunction
$(f,g,\eta)$ given by
 \begin{align}\label{fxequ300}
 \left\{
   \begin{array}{ll}
      f(x)=[(1+\gamma)\lambda+\beta]e^{\lambda x}+[(1-\gamma)\lambda-\beta]e^{-\lambda x}, \\
     g=\lambda f, \\
     \eta=-\frac{f'(1)}{\lambda}
   \end{array}
 \right.
  \end{align}
and $\lambda$ satisfies the characteristic equation
  \begin{align}\label{tauchar2001}
  e^{2\lambda}[(1+\gamma)\lambda+\beta](1+m\lambda)=[(1-\gamma)\lambda-\beta](1-m\lambda).
  \end{align}
This implies that each eigenvalue of $\mathbf{A}_2$ is geometrically simple.

Indeed, $f''(x)-\lambda^2 f(x)=0$ implies that $f$ is of the form
$f(x)=Ee^{\lambda x}+Fe^{-\lambda x}$.
Use the boundary condition $f'(0)=(\gamma\lambda+\beta)f(0)$ to derive $E=(1+\gamma)\lambda+\beta, F=(1-\gamma)\lambda-\beta$.
We can then derive $\eta=mg(1)=m\lambda f(1)=-\frac{f'(1)}{\lambda}$ and (\ref{tauchar2001}) by virtue of the boundary condition $f'(1)+m\lambda^2 f(1)=0$.

{\bf Step 2:} We show that the eigenvalues $\{\lambda_n\}_{n=-\infty}^{+\infty}$ have
asymptotic expression

\begin{equation}\label{ataunasy100}
    \lambda_n=\frac{1}{2}{\rm ln}\frac{|\gamma-1|}{\gamma+1}+n_\beta\pi i+O(|n|^{-1}),
\end{equation}
where
\begin{align}\label{nbeta}
  n_\beta=\left\{
            \begin{array}{ll}
              n, \gamma>1 \\
              n+\frac{1}{2},0<\gamma<1;
            \end{array}
          \right.
\end{align}
the corresponding eigenfunction $(f_n,\lambda_nf_n,$
$m\lambda_nf(1))$ of $\mathbf{A}_2$ can be chosen such that
 \begin{align}\label{fxequasy300}
     \nonumber&F_n(x)=\\
     \nonumber&\left(
              \begin{array}{c}
                (1+\gamma)\left|\frac{\gamma-1}{\gamma+1}\right|^{x/2}e^{in_\beta x}-(1-\gamma)\left|\frac{\gamma+1}{\gamma-1}\right|^{x/2}e^{-in_\beta x} \\
                (1+\gamma)\left|\frac{\gamma-1}{\gamma+1}\right|^{x/2}e^{in_\beta x}+(1-\gamma)\left|\frac{\gamma+1}{\gamma-1}\right|^{x/2}e^{-in_\beta x} \\
0\\
0\\
              \end{array}
            \right)\\
            &+O(n^{-1}),
  \end{align}
 where
  \begin{equation}\label{fmatrix100}
    F_n(x)=\frac{1}{\lambda_n^2}\begin{pmatrix}
      f_{n}^{'}(x) \\
      \lambda_nf_n(x)\\
      \beta f_n(0)\\
      -\lambda_n^{-1} f'_n(1)\\
    \end{pmatrix}^T.
  \end{equation}

Now we give the proof. By (\ref{tauchar2001}), it follows that
\begin{align}\label{fxc1c2c3c4}
 \nonumber &e^{2\lambda}=\frac{m(\gamma-1)\lambda^2+(m\beta-\gamma+1)\lambda-\beta}{m(\gamma+1)\lambda^2+(m\beta+\gamma+1)\lambda+\beta}\\
&=\frac{\gamma-1}{\gamma+1}+O(|\lambda|^{-1}).
\end{align}
The asymptotic expression (\ref{ataunasy100}) is derived directly by Routhe's theorem.
Then we obtain
\begin{equation}\label{fxequ30}
  \begin{split}
 &  \lambda_n\frac{f_n(x)}{\lambda^2_n}=\frac{(1+\gamma)\lambda_n+\beta}{\lambda_n}e^{\lambda_n x}+\frac{(1-\gamma)\lambda_n-\beta}{\lambda_n}
e^{-\lambda_n x}\\
&=(1+\gamma)\left|\frac{\gamma-1}{\gamma+1}\right|^{x/2}e^{in_\beta x}+(1-\gamma)\left|\frac{\gamma+1}{\gamma-1}\right|^{x/2}e^{-in_\beta x}\\
     &+O(|n|^{-1}),
  \end{split}
 \end{equation}
\begin{equation}\label{fxequ31}
  \begin{split}
   &  \frac{f'_n(x)}{\lambda^2_n}=\frac{(1+\gamma)\lambda_n+\beta}{\lambda_n}e^{\lambda_n x}-\frac{(1-\gamma)\lambda_n-\beta}{\lambda_n}
e^{-\lambda_n x}\\
=&(1+\gamma)\left|\frac{\gamma-1}{\gamma+1}\right|^{x/2}e^{in_\beta x}-(1-\gamma)\left|\frac{\gamma+1}{\gamma-1}\right|^{x/2}e^{-in_\beta x}\\
    & +O(|n|^{-1}),
  \end{split}
 \end{equation}
\begin{equation}\label{fxequ32}
  \begin{split}
   &  \frac{\beta f_n(0)}{\lambda^2_n}=\frac{(1+\gamma)\lambda_n+\beta}{\lambda^2_n}+\frac{(1-\gamma)\lambda_n-\beta}{\lambda^2_n}
=O(|n|^{-1}),
  \end{split}
 \end{equation}
\begin{equation}\label{fxequ33}
  \begin{split}
   &  \frac{-\lambda^{-1}_nf'_n(1)}{\lambda^2_n}=-\frac{(1+\gamma)\lambda_n+\beta}{\lambda^2_n}e^{\lambda_n }+\frac{(1-\gamma)\lambda_n-\beta}{\lambda^2_n}
e^{-\lambda_n }\\
&=O(|n|^{-1}),
  \end{split}
 \end{equation}
The combination of (\ref{fxequ30}), (\ref{fxequ31}), (\ref{fxequ32}) and (\ref{fxequ33}) indicates that the asymptotic expression (\ref{fxequasy300}) of $F_n(x)$ holds.

{\bf Step 3:} We show that there exist a sequence of generalized eigenfunctions of $\mathbf{A}_2$ which forms Riesz basis for $\mathbf{H}_2$.

Now we give the proof. Denote
$$P(x)=\left(
         \begin{array}{cc}
           (1+\gamma)\left|\frac{\gamma-1}{\gamma+1}\right|^{x/2} & -(1-\gamma)\left|\frac{\gamma+1}{\gamma-1}\right|^{x/2} \\
           (1+\gamma)\left|\frac{\gamma-1}{\gamma+1}\right|^{x/2} & (1-\gamma)\left|\frac{\gamma+1}{\gamma-1}\right|^{x/2} \\
         \end{array}
       \right)$$
for $\lambda>1$,
$$P(x)=\left(
         \begin{array}{cc}
           (1+\gamma)\left|\frac{\gamma-1}{\gamma+1}\right|^{x/2}i & (1-\gamma)\left|\frac{\gamma+1}{\gamma-1}\right|^{x/2}i \\
           (1+\gamma)\left|\frac{\gamma-1}{\gamma+1}\right|^{x/2}i & -(1-\gamma)\left|\frac{\gamma+1}{\gamma-1}\right|^{x/2}i \\
         \end{array}
       \right)$$
for $\lambda<1$.
It is easily seen that the operator $\mathbf{T}$ defined by $\mathbf{T}w=P(x)w$ for any $w\in \left(L^2(0,1)\right)^2$, is a bounded and invertible linear operator on $\left(L^2(0,1)\right)^2$. Combine this with the fact that $\{(e^{in\pi x},e^{-in\pi x})\}_{n=-\infty}^{+\infty}$  forms Riesz basis for $\left(L^2(0,1)\right)^2$, to obtain that $\{P(x)(e^{in\pi x},e^{-in\pi x})\}_{n=-\infty}^{+\infty}$ also forms Riesz basis for $\left(L^2(0,1)\right)^2$.
Thereby, $\{(0,0,1,0)\}\bigcup\{(0,0,0,1)\}\bigcup$
$\{(P(x)(e^{in\pi x},e^{-in\pi x}),0,$
$0)$
$\}_{n=-\infty}^{+\infty}$ forms Riesz basis
for $\left(L^2(0,1)\right)^2\times \mathds{C}^2$.

From (\ref{fxequasy300}), we derive
\begin{align}\label{fxequasy3001}
     F_n(x)=(P(x)(e^{in\pi x},e^{-in\pi x}),0,0)+O(n^{-1}),
  \end{align}
which implies that, {\color{blue} there exists} enough big positive integer $N$ such that
\begin{align*}
  &\sum_{|n|>N}\|F_n-(P(x)(e^{in\pi x},e^{-in\pi x}),0,0)\|_{\left(L(0,1)\right)^2\times\mathds{C}^2}^2\\
  &=\sum_{|n|>N}O(|n|^{-2})<\infty.
\end{align*}
Since $\{(0,0,1,0)\}\bigcup\{(0,0,0,1)\}\bigcup\{(P(x)(e^{in\pi x},e^{-in\pi x}),0,0)\}_{n=-\infty}^{+\infty}$ forms Riesz basis
for $\big(L^2(0,$
$1)\big)^2\times \mathds{C}$, it follows by
\cite[Lemma 1]{Guo2001} that there exists a $M\geq N$ such that $\{(0,0,1,0)\}\bigcup\{(0,0,0,1)\}$
$\bigcup\{(P(x)(e^{in\pi x},e^{-in\pi x}),
0,0)\}_{n=-M}^M\bigcup\{F_n\}_{|n|>M}$ forms Riesz basis
for $\left(L^2(0,1)\right)^2\times \mathds{C}^2$.
We define an isometric isomorphism $\mathbb{T}_1:\mathbf{H}_2\rightarrow \left(L^2(0,1)\right)^2\times \mathds{C}^2$
by $\mathbb{T}_1(f,g,\eta)=(f',g,\sqrt{\beta}f(0),\eta),\; \forall\;(f,g,\eta)\in \mathbf{H}_2.$
Then there exist $\{W_n(x),V_n(x),0\}_{n=-M}^M\subset \mathbf{H}_2$ such that $\{(0,0,1)\}\bigcup\{(W_n(x),V_n(x),0)\}_{n=-M}^M$
$\bigcup
\{(\lambda^{-2}_nf_n(x),\lambda^{-1}_nf_n(x),-\lambda_n^{-3}f'_n(1))\}_{|n|>M}$ forms Riesz basis for $\mathbf{H}_2$.
Since $\{(\lambda^{-2}_nf_n(x),\lambda^{-1}_nf_n(x),$
$-\lambda_n^{-3}f'_n(1))\}_{n=-\infty}^\infty$ is a sequence of generalized eigenvectors of $\mathbf{A}_2$,
and by \cite{Rao1993}, $\mathbf{A}_2$ is a densely defined and discrete operator, it follows from \cite[Theorem 1]{Guo2001} that there exists generalized eigenfunctions of $\mathbf{A}_2$ that forms Riesz basis
for $\mathbf{H}_2$. The proof is therefore completed.
\end{proof}

Since the state feedback $U(t)=-\alpha u_t-a u_{xt}(1,t)$ makes the system (\ref{beem1}) exponentially stable \cite{Guo2000,Morgul1994},
Lemma \ref{A2stable} allows us to design the following control law
\begin{align}\label{control}
    U(t)=-\alpha\widehat{u}_t(1,t)-a \widehat{u}_{xt}(1,t).
\end{align}
The closed-loop system is then described by
\begin{align}\label{closednodisturbance}
    \left\{\begin{array}{l}
u_{tt}(x,t)=u_{xx}(x,t),\;\; x\in (0,1), \; t>0, \\
u(0,t)=0,
u_{x}(1,t)+mu_{tt}(1,t)=-\alpha\widehat{u}_t(1,t)\\
-a \widehat{u}_{xt}(1,t), \;\;  t\ge 0,\\
\widehat{u}_{tt}(x,t)=\widehat{u}_{xx}(x,t),\;\; x\in (0,1), \; t>0, \\
{\color{blue}\widehat{u}_x(0,t)=\gamma \widehat{u}_t(0,t)+\beta \widehat{u}(0,t)+u_x(0,t)}, \; \;   t\ge 0,\\
\widehat{u}_{x}(1,t)+m\widehat{u}_{tt}(1,t)=-\alpha\widehat{u}_t(1,t)-a \widehat{u}_{xt}(1,t),
\end{array}\right.
\end{align}
which is equivalent to
\begin{align}\label{closednodisturbance1}
    \left\{\begin{array}{l}
u_{tt}(x,t)=u_{xx}(x,t),\;\; x\in (0,1), \; t>0, \\
u(0,t)=0,
u_{x}(1,t)+mu_{tt}(1,t)=-\alpha u_t(1,t)\\
-a u_{xt}(1,t)
-\alpha\widetilde{u}_t(1,t)-a \widetilde{u}_{xt}(1,t), \;\;  t\ge 0,\\
\widetilde{u}_{tt}(x,t)=\widetilde{u}_{xx}(x,t),\;\; x\in (0,1), \; t>0, \\
\widetilde{u}_x(0,t)=\gamma\widetilde{u}_{t}(0,t)+\beta\widetilde{u}(0,t), \; \;   t\ge 0,\\
\widetilde{u}_{x}(1,t)+m\widetilde{u}_{tt}=0,
\end{array}\right.
\end{align}

Consider system (\ref{closednodisturbance1}) in Hilbert state space $\mathcal{H}=\mathbf{H}$
$\times \mathbf{H}_2$, where $\mathbf{H}=H_E^1(0,1)\times L^2(0,1)\times\mathds{C}$, $H_E^1(0,1)=\{f$
$ \in H^1(0,1):f(0)=0\}$.
The norm of $\mathbf{H}$ is given by
$\|(f,g,\eta)\|^2_{\mathbf{H}} = \int_0^1[|f'(x)|^2+|g(x)|^2]dx+\frac{1}{m+\alpha a} |\eta|^2,$
$(f,g)\in \mathbf{H}.$
Define the operator $\mathbf{A}:D(\mathbf{A})(\subset \mathbf{H})\rightarrow \mathbf{H}$ by
$\mathbf{A}(f,g,\eta)=(g,f'',-f'(1)-\alpha g(1)),\; \forall\;(f,g)\in D(\mathbf{A})=\{(f,g)\in \mathbf{H}| f''(1)=0,
   \eta=mg(1)+af'(1)\}.$
By \cite{Guo2000,Morgul1994}, it follows that $\mathbf{A}$ generates an exponentially stable $C_0$-semigroup,
and there exist a sequence of generalized eigenfunctions of $\mathbf{A}$ that forms Riesz basis for $\mathbf{H}$ provided $\alpha=m/a\neq 1$.
In the following Lemma \ref{Astable}, we generalize the result to the case $a\neq m,\alpha>0$.

\begin{lemma}\label{Astable}
Suppose that $a\neq m$. Then, there exist a sequence of generalized eigenfunctions of operator $(\mathbf{A},D(\mathbf{A}))$ that forms Riesz basis for $\mathbf{H}$; the characteristic equation of $(\mathbf{A},D(\mathbf{A}))$ is $e^{2\lambda}[1+\alpha+(a+m)\lambda]+(1-\alpha)+a-m=0$.
\end{lemma}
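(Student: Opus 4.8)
The plan is to mimic the three-step structure used for Lemma \ref{A2stable}, adapting it to the operator $\mathbf{A}$ associated with the closed-loop system. First I would compute the spectrum: for $\lambda\in\sigma(\mathbf{A})=\sigma_P(\mathbf{A})$, any eigenfunction $(f,g,\eta)$ satisfies $g=\lambda f$, $f''=\lambda^2 f$, so $f(x)=Ee^{\lambda x}+Fe^{-\lambda x}$; the boundary condition at $x=0$ (namely $f(0)=0$, since we are now in $\mathbf{H}=H_E^1\times L^2\times\mathbb{C}$) forces $F=-E$, so up to scaling $f(x)=\sinh\lambda x$. The constraint at $x=1$, which encodes $f''(1)=0$ together with $\eta=mg(1)+af'(1)$ and the third component relation $\eta\lambda=-f'(1)-\alpha g(1)$, i.e. $\lambda(m\lambda f(1)+a f'(1))=-f'(1)-\alpha\lambda f(1)$, yields after substituting $f=\sinh\lambda x$ and clearing $e^{-\lambda}$ the characteristic equation $e^{2\lambda}[1+\alpha+(a+m)\lambda]+(1-\alpha)+(a-m)\lambda=0$. (Here I should double-check the coefficient of $\lambda$ in the non-exponential term: the statement writes $a-m$ as a bare constant, but the homogeneity of the $f''$-equation and the structure of the $x=1$ condition suggest it multiplies $\lambda$; I would verify this carefully, as it is the one spot where a sign or factor error would propagate.)

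Second, I would extract the asymptotic distribution of eigenvalues. Rewriting the characteristic equation as
\begin{equation*}
e^{2\lambda}=-\frac{(1-\alpha)+(a-m)\lambda}{1+\alpha+(a+m)\lambda}=-\frac{a-m}{a+m}+O(|\lambda|^{-1}),
\end{equation*}
valid since $a\neq m$ guarantees the leading constant $-(a-m)/(a+m)$ is a nonzero finite number (and its modulus differs from $1$, which is what gives a nonzero real part). Taking logarithms and applying Rouché's theorem exactly as in Step 2 of Lemma \ref{A2stable} gives $\lambda_n=\tfrac12\ln\bigl|\tfrac{a-m}{a+m}\bigr|+\bigl(n_*\bigr)\pi i+O(|n|^{-1})$ for suitable half-integer-shifted index $n_*$ determined by the sign of $a-m$; in particular $\mathrm{Re}\,\lambda_n\to \tfrac12\ln\bigl|\tfrac{a-m}{a+m}\bigr|<0$. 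Combined with the already-cited fact from \cite{Guo2000,Morgul1994} that $\mathbf{A}$ generates an exponentially stable $C_0$-semigroup (so the finitely many low eigenvalues also lie in the open left half-plane), this pins down the spectrum.

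Third, and this is the heart of the matter, I would establish the Riesz basis property. I would normalize the eigenfunctions — working with $F_n(x)$ built from $(\lambda_n^{-2}f_n'(x),\lambda_n^{-1}f_n(x),\cdot)$ under the isometric isomorphism $\mathbb{T}$ from $\mathbf{H}$ to $(L^2(0,1))^2\times\mathbb{C}$ that sends $(f,g,\eta)\mapsto(f',g,\eta/\sqrt{m+\alpha a})$ — and show their images are, up to an $O(|n|^{-1})$ perturbation, of the form $P(x)(e^{in\pi x},e^{-in\pi x})$ plus a decaying tail, where $P(x)$ is a fixed bounded invertible matrix multiplier (built from $e^{\pm x\,\mathrm{Re}\,\lambda_n}$ factors). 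Since $\{(e^{in\pi x},e^{-in\pi x})\}$ is a Riesz basis for $(L^2(0,1))^2$ and multiplication by $P(x)$ preserves this, the perturbed family is a Riesz basis after adjoining the finitely many "ODE-part" unit vectors; then \cite[Lemma 1]{Guo2001} lets me replace all but finitely many members with the true $F_n$, and \cite[Theorem 1]{Guo2001} (using that $\mathbf{A}$ is densely defined and discrete) upgrades this to genuine generalized eigenfunctions of $\mathbf{A}$ forming a Riesz basis for $\mathbf{H}$.

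The main obstacle I anticipate is bookkeeping the $x=1$ boundary block correctly: unlike Lemma \ref{A2stable} where the damping sat at $x=0$, here both the tip-mass dynamics and the $\alpha$-velocity and $a$-angular-velocity feedback act at $x=1$, so the third (ODE) component $\eta=mg(1)+af'(1)$ couples nontrivially to $f'(1)$, and I must check that the contribution of this component to $F_n$ is genuinely $O(|n|^{-1})$ in the $\mathbf{H}$-norm (i.e. that it does not pollute the leading-order Riesz-basis comparison) and that the finite-rank adjustment in the $\mathbb{C}$-slot is handled as a single extra basis vector rather than something that destroys completeness. The asymptotic computation and the $P(x)$-multiplier argument are then essentially routine once that coupling is pinned down.
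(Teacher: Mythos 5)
Your proposal follows essentially the same route as the paper's proof: the same eigenfunction ansatz $f(x)=e^{\lambda x}-e^{-\lambda x}$ and characteristic equation, the same Rouch\'e-type asymptotics $\lambda_n=\tfrac12\ln\bigl|\tfrac{m-a}{m+a}\bigr|+n_*\pi i+O(|n|^{-1})$, and the same Riesz-basis argument via a bounded invertible matrix multiplier acting on $\{(e^{in\pi x},e^{-in\pi x})\}$ followed by \cite[Lemma 1]{Guo2001} and \cite[Theorem 1]{Guo2001}. Your suspicion about the non-exponential term is well founded: the correct equation is $e^{2\lambda}[1+\alpha+(a+m)\lambda]+(1-\alpha)+(a-m)\lambda=0$, which is exactly the form the paper itself uses when deriving $e^{2\lambda}=\tfrac{m-a}{m+a}+O(|\lambda|^{-1})$, so the bare $a-m$ in the statement is a typo.
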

\begin{proof}\ \
We divide the proof into several steps.

{\bf Step 1.}
We claim that operator $(\mathbf{A},D(\mathbf{A}))$ a densely defined and discrete operator on $\mathbf{H}$; for any $\lambda\in \sigma(\mathbf{A})=\sigma_P(\mathbf{A})$, ${\rm Re} \lambda<0$.

Indeed, since by \cite{Morgul1994},$(\mathbf{A},D(\mathbf{A}))$ generates an exponentially stable $C_0$-semigroup, it is densely defined and has bounded inverse. Let $(f,g,\eta)\in \mathbf{H}$. Since by Sobolev imbedding theory $H^2(0,1)\times H^1(0,1)\times \mathds{C}$ is compactly imbedding in $H^1(0,1)\times L^2(0,1)\times \mathds{C}$, $\mathbf{A}^{-1}$ is compact. Moreover, for any $\lambda\in \sigma(\mathbf{A})=\sigma_P(\mathbf{A})$, ${\rm Re} \lambda<0$.

{\bf Step 2.}  We claim that for any $\lambda\in\sigma(\mathbf{A})$, there corresponds one eigenfunction
$(f,g,\eta)$ given by
 \begin{align}\label{fxequ300}
 \left\{
   \begin{array}{ll}
      f(x)=e^{\lambda x}-e^{-\lambda x}, \\
     g=\lambda f, \\
     \eta=-\frac{f'(1)+\alpha\lambda f(1)}{\lambda},
   \end{array}
 \right.
  \end{align}
and $\lambda$ satisfies the characteristic equation
  \begin{align}\label{tauchar200}
  e^{2\lambda}[1+\alpha+(a+m)\lambda]+(1-\alpha)+(a-m)=0.
  \end{align}
This implies that each eigenvalue of $\mathbf{A}$ is geometrically simple.

{\bf Step 3:} We show that the eigenvalues $\{\lambda_n\}_{n=-\infty}^{+\infty}$ have
asymptotic expression
\begin{equation}\label{Aataunasy100}
    \lambda_n=\frac{1}{2}{\rm ln}\frac{|m-a|}{m+a}+n_a\pi i+O(|n|^{-1}),
\end{equation}
where $n_a=n$ if $m>a$ and  $n_a=n+\frac{1}{2}$ if $0<m<a$,
the corresponding eigenfunction $(f_n,\lambda_nf_n,$
$m\lambda_nf(1)+af'_n(1))$ of $\mathbf{A}_2$ can be chosen such that
 \begin{align}\label{Afxequasy300}
    \nonumber &F_n(x)=\left(
              \begin{array}{c}
                \left|\frac{m-a}{m+a}\right|^{x/2}e^{in_a x}+\left|\frac{m+a}{m-a}\right|^{x/2}e^{-in_a x} \\
                \left|\frac{m-a}{m+a}\right|^{x/2}e^{in_a x}-\left|\frac{m+a}{m-a}\right|^{x/2}e^{-in_a x} \\
0\\
              \end{array}
            \right)\\
            &+O(n^{-1}),
  \end{align}
 where
  \begin{equation}\label{Afmatrix100}
    F_n(x)=\frac{1}{\lambda_n}\begin{pmatrix}
      f_{n}^{'}(x) \\
      \lambda_nf_n(x)\\
      -\lambda_n^{-1}[f'_n(1)+\alpha\lambda_n f_n(1)]\\
    \end{pmatrix}^T.
  \end{equation}

Now we give the proof. By (\ref{tauchar200}), it follows that
\begin{align}\label{fxc1c2c3c4}
e^{2\lambda}=-\frac{1-\alpha+(a-m)\lambda}{1+\alpha+(a+m)\lambda}=\frac{m-a}{m+a}+O(|\lambda|^{-1}).
\end{align}
The asymptotic expression (\ref{Aataunasy100}) is derived directly by Routhe's theorem.

{\bf {\color{blue}Step 4}:} We show that there exist a sequence of generalized eigenfunctions of $\mathbf{A}$ which forms Riesz basis for $\mathbf{H}$.

Now we give the proof. Denote
\begin{align*}
  Q(x)=\left\{
         \begin{array}{ll}
           \left(
             \begin{array}{cc}
               \left|\frac{m-a}{m+a}\right|^{x/2} &\left|\frac{m+a}{m-a}\right|^{x/2}\\
               \left|\frac{m-a}{m+a}\right|^{x/2} & -\left|\frac{m+a}{m-a}\right|^{x/2} \\
             \end{array}
           \right), m>a,
 \\
           \left(
             \begin{array}{cc}
               \left|\frac{m-a}{m+a}\right|^{x/2}i &-\left|\frac{m+a}{m-a}\right|^{x/2}i \\
               \left|\frac{m-a}{m+a}\right|^{x/2}i & \left|\frac{m+a}{m-a}\right|^{x/2}i\\
             \end{array}
           \right),m<a.
         \end{array}
       \right.
\end{align*}
It is easily seen that the operator $\mathbf{T}$ defined by $\mathbf{T}w=Q(x)w$ for any $w\in \left(L^2(0,1)\right)^2$, is a bounded and invertible operator
on $\left(L^2(0,1)\right)^2$. Combine this with the fact that $\{(e^{in\pi x},e^{-in\pi x})\}_{n=-\infty}^{+\infty}$  forms Riesz basis for $\left(L^2(0,1)\right)^2$, to obtain that $\{Q(x)(e^{in\pi x},e^{-in\pi x})\}_{n=-\infty}^{+\infty}$ also forms Riesz basis for $\left(L^2(0,1)\right)^2$.
Thereby, $\{(0,0,1)\}\bigcup\{(Q(x)(e^{in\pi x},e^{-in\pi x}),$
$0)\}_{n=-\infty}^{+\infty}$ forms Riesz basis
for $\left(L^2(0,1)\right)^2\times \mathds{C}$.

By (\ref{fxequasy300}), it follows that
\begin{align}\label{fxequasy3001}
     F_n(x)=(Q(x)(e^{in\pi x},e^{-in\pi x}),0)+O(n^{-1}),
  \end{align}
which implies that, there exists enough big positive integer $N$ such that
\begin{align*}
  \sum_{|n|>N}\|F_n(x)-(Q(x)(e^{in\pi x},e^{-in\pi x}),0)\|_{\left(L(0,1)\right)^2\times\mathds{C}^2}^2=\sum_{|n|>N}O(|n|^{-2})<\infty.
\end{align*}
{\color{blue}Since $\{(0,0,1)\}\bigcup\{(Q(x)(e^{in\pi x},e^{-in\pi x}),0)\}_{n=-\infty}^{+\infty}$ forms Riesz basis
for $\left(L^2(0,1)\right)^2\times \mathds{C}$, it follows by
\cite[Lemma 1]{Guo2001} that there exists a $M\geq N$ such that $\{(0,0,1)\}\bigcup\{(Q(x)(e^{in\pi x},e^{-in\pi x}),
0)\}_{n=-M}^M\bigcup$
 $\{F_n\}_{|n|>M}$ forms Riesz basis
for $\left(L^2(0,1)\right)^2\times \mathds{C}$.
We define an isometric isomorphism $\mathbb{T}_1:\mathbf{H}\rightarrow \left(L^2(0,1)\right)^2\times \mathds{C}$
by $\mathbb{T}_1(f,g,\eta)=(f',g,\eta),\; \forall\;(f,g,\eta)\in \mathbf{H}.$
Then there exist $\{W_n(x),V_n(x),0\}_{n=-M}^M\subset \mathbf{H}$ such that $\{(0,0,1)\}\bigcup\{(W_n(x),V_n(x),0)\}_{n=-M}^M\bigcup
\{(\lambda^{-1}_nf_n(x),f_n(x),$
$-\lambda_n^{-2}[f'_n(1)+\alpha\lambda_n f_n(1)])$
$\}_{|n|>M}$ forms Riesz basis for $\mathbf{H}$.
Since $\{(\lambda^{-1}_n(x),f_n(x),$
$-\lambda_n^{-2}[f'_n(1)+\alpha\lambda_n f_n(1)])\}_{n=-\infty}^\infty$ is a sequence of generalized eigenvectors of $\mathbf{A}$,
and $\mathbf{A}$ is a densely defined and discrete operator.}
By \cite[Theorem 1]{Guo2001}, it follows that there exists generalized eigenfunctions of $\mathbf{A}$ that forms Riesz basis
for $\mathbf{H}$. This completes the proof.
\end{proof}

Define operator $\mathcal{A}:D(\mathcal{A})(\subset \mathcal{H})\rightarrow \mathcal{H}$ by
$\mathcal{A}(f,g,\eta,\phi,\psi,$
$h)=(g,f'',-f'(1)-\alpha g(1)-\alpha \psi(1),
    \mathbf{A}_2(\phi,\psi,h)),$
    $D(\mathcal{A})=\{(f,g,\eta,\phi,\psi,h)\in (H_E^1(0,1)\bigcap $
    $H^2(0,1))
    \times H_E^1(0,1)\times \mathds{C}\times D(\mathbf{A}_2),
    \eta=af'(1)+mg(1)+a\phi'(1),h=m\psi(1) \}.$
Then the system (\ref{closednodisturbance1}) is abstractly described by
$$\frac{d}{dt}Z(t)=\mathcal{A}Z(t),$$
where $Z(t)=\big(u(\cdot,t),u_t(\cdot,t),au_x(1,t)+mu_t(1,t)+a\widetilde{u}_x(1,t),\widetilde{u}(\cdot,t),$
$\widetilde{u}_t(\cdot,t),m\widetilde{u}_t(1,t)\big).$
We shall use Riesz basis approach to verify semigroup generation and exponential stability.
However, it is difficult to verify the Riesz basis generation of couple wave equations \cite{Guo2007,Guo2019,Mei2020b}, not to mention wave equations
with tip mass. We shall adopt Bari's theorem to prove Riesz basis generation of operator $\mathcal{A}$ by finding out complicated
relations between sequences of generalized eigenfunctions.

\begin{theorem}\label{exponentialnodisturbance}
Assume that $m\neq a$, $m\neq a\gamma$ and $\gamma\neq 1$. Then, the operator $\mathcal{A}$ generates an exponentially stable $C_0$-semigroup on $\mathcal{H}$.
\end{theorem}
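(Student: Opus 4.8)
The plan is to run, for the coupled operator $\mathcal{A}$, the same Riesz-basis program used in Lemmas \ref{A2stable} and \ref{Astable}, now exploiting the essentially block-triangular structure of $\mathcal{A}$: the error block $\mathbf{A}_2$ evolves autonomously and drives the closed-loop block $\mathbf{A}$ through a coupling that enters only via the $x=1$ boundary term $-\alpha\widetilde u_t(1,t)-a\widetilde u_{xt}(1,t)$ and is therefore unbounded on $\mathcal{H}$ (so a direct bounded-perturbation argument on the semigroups is not available and the spectral route is genuinely needed). First I would check that $\mathcal{A}$ is a densely defined discrete operator: since $\mathbf{A}$ and $\mathbf{A}_2$ have bounded inverses, the equation $\mathcal{A}Z=Y$ is solved explicitly by inverting the $\widetilde u$-block with $\mathbf{A}_2^{-1}$ and then solving the resulting inhomogeneous two-point problem for the $u$-block, and the Sobolev embedding makes $\mathcal{A}^{-1}$ compact. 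Hence $\sigma(\mathcal{A})=\sigma_P(\mathcal{A})$ consists of isolated eigenvalues of finite algebraic multiplicity.

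Next I would identify the spectrum and the eigenfunctions. The $\widetilde u$-component of an eigenvector of $\mathcal{A}$ is either $0$ or an eigenvector of $\mathbf{A}_2$, so $\sigma(\mathcal{A})=\sigma(\mathbf{A})\cup\sigma(\mathbf{A}_2)$; for $\lambda\in\sigma(\mathbf{A})$ an eigenfunction has zero $\widetilde u$-part and $u$-part the eigenfunction of $\mathbf{A}$ from Lemma \ref{Astable}, while for $\lambda\in\sigma(\mathbf{A}_2)\setminus\sigma(\mathbf{A})$ the $\widetilde u$-part is the eigenfunction $f_n$ of $\mathbf{A}_2$ from Lemma \ref{A2stable} and the $u$-part is the unique $c(\lambda)\,(e^{\lambda x}-e^{-\lambda x})$ solving the forced boundary problem, with $c(\lambda)=-e^{\lambda}\big(\alpha f_n(1)+af_n'(1)\big)$ divided by the value at $\lambda$ of the characteristic function $e^{2\lambda}[1+\alpha+(a+m)\lambda]+(1-\alpha)+(a-m)\lambda$ of $\mathbf{A}$. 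Since the asymptotic eigenvalue lines $\mathrm{Re}\,\lambda=\tfrac12\ln\tfrac{|m-a|}{m+a}$ and $\mathrm{Re}\,\lambda=\tfrac12\ln\tfrac{|\gamma-1|}{\gamma+1}$ are distinct exactly when $m\neq a\gamma$, the set $\sigma(\mathbf{A})\cap\sigma(\mathbf{A}_2)$ is finite, so only finitely many generalized eigenfunctions of rank $\ge2$ occur and they do not affect the Riesz-basis conclusion.

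The heart of the proof is the asymptotics of the normalized generalized eigenfunctions of $\mathcal{A}$ (normalized as in the proofs of Lemmas \ref{A2stable} and \ref{Astable}). For $\lambda_n\in\sigma(\mathbf{A})$ this is just Lemma \ref{Astable} embedded in the first block with zero second block. For $\lambda_n\in\sigma(\mathbf{A}_2)$ one uses $e^{2\lambda_n}\to\tfrac{\gamma-1}{\gamma+1}$: then $\alpha f_n(1)+af_n'(1)$ is of exact order $|\lambda_n|^2$ (its leading coefficient, proportional to $(1+\gamma)e^{2\lambda_n}-(1-\gamma)\to 2(\gamma-1)$, is nonzero because $\gamma\neq1$), while the characteristic function of $\mathbf{A}$ at $\lambda_n$ equals $(a+m)\lambda_n\big(\tfrac{\gamma-1}{\gamma+1}-\tfrac{m-a}{m+a}\big)+o(\lambda_n)$, which is bounded below by $c|\lambda_n|$ for some $c>0$ precisely because $m\neq a\gamma$. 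Hence $c(\lambda_n)$ is of exact order $|\lambda_n|$ with convergent leading coefficient, and the normalized $\mathcal{A}$-eigenfunction attached to $\lambda_n\in\sigma(\mathbf{A}_2)$ converges in $\mathcal{H}$, with an $O(|n|^{-1})$ error, to an explicit vector $(\Phi_n,\Psi_n)$, where $\Psi_n$ is the $\mathbf{H}_2$-limit of Lemma \ref{A2stable} and $\Phi_n$ is an oscillatory profile of the type $Q(x)(e^{in\pi x},e^{-in\pi x})$ appearing in Lemma \ref{Astable}.

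Finally I would invoke Bari's theorem. The family consisting of the lifted $\mathbf{A}$-limits $(\cdot,0)$ for $\lambda_n\in\sigma(\mathbf{A})$ together with the vectors $(\Phi_n,\Psi_n)$ for $\lambda_n\in\sigma(\mathbf{A}_2)$ is a Riesz basis of $\mathcal{H}=\mathbf{H}\times\mathbf{H}_2$: it is the image, under the bounded invertible triangular map $(v,w)\mapsto(v+Kw,w)$, of the product Riesz basis obtained by juxtaposing the Riesz bases of Lemmas \ref{Astable} and \ref{A2stable}, where $K:\mathbf{H}_2\to\mathbf{H}$ is the bounded operator sending each $\Psi_n$ to $\Phi_n$ (bounded since $\{\Phi_n\}$ is a Bessel sequence). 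As $\mathcal{A}$ is discrete and its normalized generalized eigenfunctions differ from this Riesz basis by an $\ell^2$-summable perturbation (the accumulated $O(|n|^{-1})$ errors), \cite[Theorem 1]{Guo2001} yields a Riesz basis of generalized eigenfunctions of $\mathcal{A}$ for $\mathcal{H}$, hence the generation of a $C_0$-semigroup and the identity $\omega_0(\mathcal{A})=\sup\mathrm{Re}\,\sigma(\mathcal{A})=\max\{\sup\mathrm{Re}\,\sigma(\mathbf{A}),\sup\mathrm{Re}\,\sigma(\mathbf{A}_2)\}<0$, since $\mathbf{A}$ and $\mathbf{A}_2$ generate exponentially stable semigroups. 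This proves $e^{\mathcal{A}t}$ is exponentially stable. The main obstacle is the third step — carrying the coupled $u$-component through the boundary coupling and showing it stays $\ell^2$-close to a fixed Riesz-basis element — which is exactly where $m\neq a\gamma$, $\gamma\neq1$ and $m\neq a$ are used, together with the routine but fiddly handling of the finitely many coincident eigenvalues.
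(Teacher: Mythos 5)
Your proposal is correct and follows essentially the same route as the paper's proof: compactness of $\mathcal{A}^{-1}$, the identification $\sigma(\mathcal{A})=\sigma(\mathbf{A})\cup\sigma(\mathbf{A}_2)$, explicit asymptotics of the coupled $u$-component of the eigenfunctions attached to $\sigma(\mathbf{A}_2)$ (where $m\neq a\gamma$ and $\gamma\neq1$ enter exactly as you say), a bounded invertible block-triangular map carrying the product Riesz basis of $\mathbf{H}\times\mathbf{H}_2$ onto the limit profiles (the paper's matrix $W$ in (\ref{guanxi1})--(\ref{guanxi2}) is your $(v,w)\mapsto(v+Kw,w)$), and then Bari's theorem with \cite[Theorem 1]{Guo2001} followed by the spectrum-determined growth condition. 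Your explicit treatment of the finitely many coincident eigenvalues is a detail the paper leaves implicit, but it does not change the argument.
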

\begin{proof}\ \
It is easy to show that $\mathcal{A}^{-1}$ exists and is compact on $\mathcal{H}$, thereby the spectrum of $\mathcal{A}$ consists of eigenvalues.
Now we show that $\sigma(\mathcal{A})=\sigma(\mathbf{A})\bigcup \sigma(\mathbf{A}_2)$.
By \cite{Guo2007} and Lemma \ref{Riesz}, $\mathbf{A}$ and $\mathbf{A}_2$ are also discrete operators. Hence $\sigma(\mathcal{A})\supseteq\sigma(\mathbf{A})\bigcup \sigma(\mathbf{A}_2).$
Let $\lambda\in \sigma(\mathcal{A})$ and $(f,g,\eta,\phi,\psi,h)$ be the corresponding eigenfunction.
If $(\phi,\psi,h)\neq 0$, $\lambda\in\sigma(\mathbf{A}_2)$; if $(\phi,\psi,h)=0$, we have that $(f,g,\eta)\neq 0$,
{\color{blue}$(f,g,\eta)\in D(\mathbf{A})$}, and {\color{blue}$\lambda (f,g,\eta)=\mathbf{A}(f,g,\eta)$}, which implies that {\color{blue}$\lambda\in \sigma(\mathbf{A})$}. Hence $\sigma(\mathcal{A})\subseteq\sigma(\mathbf{A})\bigcup \sigma(\mathbf{A}_2).$ Therefore $\sigma(\mathcal{A})=\sigma(\mathbf{A})\bigcup \sigma(\mathbf{A}_2)$.

Next, we shall show that the generalized eigenfunction of $\mathcal{A}$ forms a Riesz basis for $\mathcal{H}$.
Let $\{\mu_{n}\}_{n=-\infty}^\infty$ and $\{\lambda_n\}_{n=-\infty}^\infty$ be respectively the eigenvalues of $\mathbf{A}$ and $\mathbf{A}_2$.
Let $\{(\mu_{n}^{-1}f_{n},f_{n},-\mu_n^{-2}( f'(1)$
$+\alpha\mu_nf_n(1)))\}_{n=-\infty}^\infty$ and $\{(\lambda^{-2}_n\phi_{n},\lambda_n^{-1}\phi_{n},\lambda^{-3}_n\phi'_{n}(1)\}_{n=-\infty}^\infty$ be the generalized eigenfunctions corresponding to $\{\mu_{n}\}_{n=-\infty}^\infty$ and $\{\lambda_{n}\}_{n=-\infty}^\infty$ such that they
form Riesz basises for $\mathbf{H}$ and $ \mathbf{H}_2$, respectively.
Hence, the sequence $\{(\mu_{n}^{-1}f_{n},f_{n},-\mu_n^{-2}( f'(1)+\alpha\mu_nf_n(1)),0,0,0)\}_{n=-\infty}^\infty\bigcup$
$\{(0,0,0,\lambda^{-2}_n\phi_{n},\lambda_n^{-1}\phi_{n},$
$\lambda^{-3}_n\phi'_{n}(1))\}_{n=-\infty}^\infty$ forms a Riesz basis for $\mathcal{H}$,
 which is equivalent to that $\{(\mu_{n}^{-1}f'_{n},f_{n},-\mu_n^{-2}$
 $( f'(1)+\alpha\mu_nf_n(1)),0,0,0,0)\}_{n=-\infty}^\infty\bigcup\{(0,0,0,\lambda^{-2}_n\phi'_{n},$
 $\lambda_n^{-1}\phi_{n},\lambda_n^{-2}\beta\phi_n(0),
\lambda^{-3}_n\phi'_{n}(1)$
$\}_{n=-\infty}^\infty$
forms a Riesz basis for $\left(L^2(0,1)\right)^2\times \mathds{C}\times \left(L^2(0,1)\right)^2\times \mathds{C}^2$.

Let $\lambda\in \sigma(\mathcal{A})$ and $\big(\lambda^{-2}f,\lambda^{-1}f,
-\lambda^{-3}(f'(1)+\alpha\lambda f(1)+\alpha\lambda \phi(1)),$
$\lambda^{-2}\phi,\lambda^{-1}\phi,\lambda^{-3}\phi'(1)\big)$ be the corresponding eigenfunction. If $\phi=0$, then $\big(\lambda^{-2}f,\lambda^{-1}f, -\lambda^{-3}(f'(1)+\alpha\lambda f(1))\big)\neq 0$ and $\lambda\in \sigma(\mathbf{A})$.
Hence the eigenvalues $\{\mu_{n}\}_{n=1}^\infty$ corresponds the eigenfunctions $\{\big(\mu_{n}^{-2}f_n,\mu_{n}^{-1}f_n,$
$-\mu_{n}^{-3}(f_n'(1)+\alpha\mu_{n} f_n(1)),0,0,0\big)\}_{n=-\infty}^\infty$.

If $\phi\neq 0$, then $\lambda\in \sigma(\mathbf{A}_2)$. The eigenvalues $\{\lambda_{n}\}_{n=-\infty}^\infty$ corresponds
the eigenfunction $\big(\lambda_n^{-2}\phi_n,$
$\lambda_n^{-1}\phi_n,\lambda_n^{-3}\phi_n'(1)\big)$ of $\mathbf{A}_2$. By Lemma \ref{A2stable}, it follows that
\begin{equation}\label{f1n}
  \begin{split}
     \phi_{n}(x)=[(1+\gamma)\lambda_n+\beta]e^{\lambda_n x}+[(1-\gamma)\lambda_n-\beta]e^{-\lambda_n x}.
  \end{split}
  \end{equation}
Let $\big(\lambda_n^{-2}f_{1n},\lambda_n^{-1}f_{1n},
-\lambda_n^{-3}(f_{1n}'(1)+\alpha\lambda_n f_{1n}(1)+\alpha\lambda_n \phi_{n}(1)),\lambda_n^{-2}\phi_n,$
$\lambda_n^{-1}\phi_n,\lambda_n^{-3}\phi_n'(1)\big)\in \mathcal{H}$ be the eigenfunction of $\mathcal{A}$ corresponding to eigenvalues $\{\lambda_{n}\}_{n=-\infty}^\infty$. Then, $f_{1n}$ satisfies
$f''_{1n}(x)=\lambda_n^2f_{1n}(x),$
        $f_{1n}(0)=0,
       (1+a\lambda_n)f'_{1n}(1)+(m\lambda^2_n+\alpha\lambda_n)f_{1n}
       =-\alpha\lambda_n\phi_n(1)-a\lambda_n\phi'_n(1).$
The solution of the ode $f''_{1n}(x)=\lambda^2_{n}f_{1n}(x)$ with boundary conditions $f_{1n}(0)=0$ is of the form
\begin{align}\label{f1nE}
    f_{1n}(x)=E(e^{\lambda_n x}-e^{-\lambda_n x}).
\end{align}
Combine (\ref{f1n}) and $(1+a\lambda_n)f'_{1n}(1)+(m\lambda^2_n+\alpha\lambda_n)f_{1n}=-\alpha\lambda_n\phi_n(1)-a\lambda_n\phi'_n(1)$ to derive
$\frac{E}{\lambda_n}=-\frac{\alpha\phi_n(1)+a\phi'_n(1)}{E_{11}e^{\lambda_n}+E_{12}e^{-\lambda_n}}$
$=-\frac{E_{21}e^{\lambda_n}
   +E_{22}e^{-\lambda_n}}{E_{11}e^{\lambda_n}+E_{12}e^{-\lambda_n}},$
where $E_{11}=\lambda_n[1+\alpha+(a+m)\lambda_n],E_{12}=\lambda_n[1-\alpha+(a-m)\lambda_n], E_{21}=(\alpha$
$+a\lambda_n)[(1+\gamma)\lambda_n+\beta],E_{22}=(\alpha-a\lambda_n)[(1-\gamma)\lambda_n-\beta].$
Use the characteristic equation in Lemma \ref{Astable} to get
$\frac{E}{\lambda_n}=-\frac{E_{21}F_1+E_{22}F_2}
   {E_{11}F_1+E_{12}F_2}=\frac{a(\gamma^2-1)}{\gamma a-m}+O(|n^{-1}|),$
where $F_1=[(1-\gamma)\lambda_n-\beta](1-m\lambda_n)$, $F_2=[(1+\gamma)\lambda_n+\beta](1+m\lambda_n)$.
This, together with (\ref{f1nE}) indicates
\begin{align*}
&\lambda_{n}^{-2}f'_{1n}(x)=\lambda_n^{-1}E[e^{\lambda_nx}+e^{-\lambda_nx}]=\frac{a(\gamma^2-1)}{\gamma a-m}\cdot\\
&\bigg[\left|\frac{\gamma-1}{\gamma+1}\right|^{x/2}e^{in_\beta x}+\left|\frac{\gamma+1}{\gamma-1}\right|^{x/2}e^{-in_\beta x}\bigg]+O(|n|^{-1}),\\
&\lambda_{n}^{-1}f_{1n}(x)=\lambda_n^{-1}E[e^{\lambda_nx}-e^{-\lambda_nx}]=\frac{a(\gamma^2-1)}{\gamma a-m}\cdot\\
&\left[\left|\frac{\gamma-1}{\gamma+1}\right|^{x/2}e^{in_\beta x}-\left|\frac{\gamma+1}{\gamma-1}\right|^{x/2}e^{-in_\beta x}\right]+O(|n|^{-1}).
\end{align*}
Denote $W=\left(
            \begin{array}{cc}
              I_3 & V \\
              0 & I_3 \\
            \end{array}
          \right),
$ with $V=\frac{2a}{m-a\gamma}\left(
            \begin{array}{ccc}
              -\gamma & 1& 0 \\
              1 & -\gamma & 0 \\
              0& 0& 0\\
            \end{array}
          \right).$
Obviously, $W$ has bounded inverse. Combine this with Lemma \ref{A2stable} to derive
\begin{align}
\nonumber    &\big(\mu_{n}^{-2}f_n,\mu_{n}^{-1}f_n,
-\mu_{n}^{-3}(f_n'(1)+\alpha\mu_{n} f_n(1)),0,0,0\big)^T=\\
\label{guanxi1}&W\big(\mu_{n}^{-2}f_n,\mu_{n}^{-1}f_n,
-\mu_{n}^{-3}(f_n'(1)+\alpha\mu_{n} f_n(1)),0,0,0\big)^T,\\
\nonumber&\big(\lambda_n^{-2}f_{1n},\lambda_n^{-1}f_{1n},
-\lambda_n^{-3}(f_{1n}'(1)+\alpha\lambda_n f_{1n}(1)\\
\nonumber&+\alpha\lambda_n \phi_{n}(1)),\lambda_n^{-2}\phi_n,\lambda_n^{-1}\phi_n,\lambda_n^{-3}\phi_n'(1)\big)^T=W\big(0,0,0,\\
\label{guanxi2}&\lambda_n^{-2}\phi_n,\lambda_n^{-1}\phi_n,\lambda_n^{-3}\phi_n'(1)\big)^T+O(|n|^{-1}).
\end{align}
Then, by Bari's theorem and \cite[Theorem 1]{Guo2001}, the sequence $\{\big(\mu_{n}^{-2}f'_n,\mu_{n}^{-1}f_n,$
$-\mu_{n}^{-3}(f_n'(1)+\alpha\mu_{n} f_n(1)),$
$0,0,0,0\big)\}_{n=-\infty}^\infty \bigcup \{\big(\lambda_n^{-2}f'_{1n},$
$\lambda_n^{-1}f_{1n},-\lambda_n^{-3}(f_{1n}'(1)+\alpha\lambda_n f_{1n}(1)+\alpha\lambda_n \phi_{n}(1)),\lambda_n^{-2}\phi'_n,$
$\lambda_n^{-1}\phi_n,\lambda_n^{-2}\beta \phi_n(0),$
$\lambda_n^{-3}\phi_n'(1)\big)\}_{n=-\infty}^\infty$ forms Riesz basis for the Hilbert space $\big(L^2(0,1)\big)^2\times  \mathds{C}\times \left(L^2(0,1)\right)^2
\times  \mathds{C}^2$,
which is equivalent to that the sequence $\{\big(\mu_{n}^{-2}f_n,\mu_{n}^{-1}f_n,$
$-\mu_{n}^{-3}(f_n'(1)+\alpha\mu_{n} f_n(1)),0,0,0\big)\}_{n=-\infty}^\infty $
$\bigcup \{\big(\lambda_n^{-2}f_{1n},$
$\lambda_n^{-1}f_{1n},
-\lambda_n^{-3}(f_{1n}'(1)+\alpha\lambda_n f_{1n}(1)+\alpha\lambda_n \phi_{n}(1)),\lambda_n^{-2}\phi_n,$
$\lambda_n^{-1}\phi_n,\lambda_n^{-3}\phi_n'(1)\big)\}_{n=-\infty}^\infty$ forms Riesz basis for $\mathcal{H}$.

The semigroup generation and spectrum-determined growth condition of $\mathcal{A}$ are derived directly from the Riesz basis property.
Since both $\mathbf{A}$ and $\mathbf{A}_2$ are generator of exponentially stable $C_0$-semigroups,  $e^{\mathcal{A}t}$ is exponentially stable.
\end{proof}

\begin{remark}\label{zhishu}\em
We have to mention that, although it forms Riesz basis
for $\mathcal{H}$,
$\{(\mu_{n}^{-1}f_{n},f_{n},-\mu_n^{-2}( f'(1)+\alpha\mu_nf_n(1)),$
$0,0,0)\}_{n=-\infty}^\infty\bigcup\{(0,0,0,\lambda^{-2}_n\phi_{n},\lambda_n^{-1}\phi_{n},
\lambda^{-3}_n\phi'_{n}(1)\}_{n=-\infty}^\infty$  are not all the generalized eigenfunctions of $\mathcal{A}$.
This is the reason why we choose to compute the generalized eigenfunctions of $\mathcal{A}$
and find out the relations (\ref{guanxi1}) and (\ref{guanxi2}), which are the key steps for the proof.
\end{remark}

\begin{theorem}\label{nodisturbancemain}
Given initial value $(u(\cdot,0),u_t(\cdot,0),mu_t(1,$
$0)+a\widehat{u}_x(1,0),\widehat{u}(\cdot,0),\widehat{u}_t(\cdot,0),m\widehat{u}_t(1,0)+a\widehat{u}_x(1,0)) \in \mathcal{H}_1$, there exists a unique solution to system (\ref{closednodisturbance}) such that
$(u(\cdot,t),u_t(\cdot,t),mu_t(1,t)+a\widehat{u}_x(1,t),\widehat{u}(\cdot,t),\widehat{u}_t(\cdot,t),m\widehat{u}_t(1,$
$t)+a\widehat{u}_x(1,t)) \in C(0,\infty;
\mathcal{H}_1)$  satisfying
$\int_0^1(|u_t(x,t)|^2+|u_{x}(x,t)|^2+|\widehat{u}_t(x,t)|^2+|\widehat{u}_{x}(x,t)|^2)dx
  +|mu_t(1,t)+a\widehat{u}_x(1,t)|^2/m+{\color{blue}|m\widehat{u}_t(1,0)+a\widehat{u}_x(1,0)|^2/(m+a\alpha)}+\beta|\widehat{u}(0,t)|^2$
  $\leq M_1e^{-\gamma_1 t},$
where $M_1$ and $\gamma_1$ are two positive constants.
\end{theorem}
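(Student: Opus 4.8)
The plan is to obtain the statement directly from Theorem~\ref{exponentialnodisturbance} by transporting the exponential decay of $e^{\mathcal{A}t}$ through the substitution $\widetilde{u}=\widehat{u}-u$ that turns \dref{closednodisturbance} into \dref{closednodisturbance1}. First I would make the equivalence of the two formulations precise at the level of state vectors: if $(u,\widehat{u})$ solves \dref{closednodisturbance} then $(u,\widehat{u}-u)$ solves \dref{closednodisturbance1}, and conversely $(u,u+\widetilde{u})$ recovers a solution of \dref{closednodisturbance}. The associated map between state vectors sends $\big(u,u_t,mu_t(1,t)+a\widehat{u}_x(1,t),\widehat{u},\widehat{u}_t,m\widehat{u}_t(1,t)+a\widehat{u}_x(1,t)\big)$ to $Z(t)=\big(u,u_t,au_x(1,t)+mu_t(1,t)+a\widetilde{u}_x(1,t),\widetilde{u},\widetilde{u}_t,m\widetilde{u}_t(1,t)\big)$ and is linear; since $\widehat{u}=u+\widetilde{u}$, the entry $mu_t(1,t)+a\widehat{u}_x(1,t)$ coincides with the third component of $Z(t)$ and $m\widehat{u}_t(1,t)+a\widehat{u}_x(1,t)$ is the sum of the third and sixth components of $Z(t)$, so this map and its inverse are bounded, i.e.\ it is a topological isomorphism between the state space $\mathcal{H}_1$ of \dref{closednodisturbance} and $\mathcal{H}=\mathbf{H}\times\mathbf{H}_2$. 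Consequently the prescribed initial datum corresponds to a unique $Z(0)\in\mathcal{H}$.

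Next, Theorem~\ref{exponentialnodisturbance} applies, since $m\neq a$, $m\neq a\gamma$ and $\gamma\neq 1$, and provides constants $M\geq 1$, $\omega>0$ with $\|e^{\mathcal{A}t}\|_{\mathcal{H}}\leq Me^{-\omega t}$ for $t\geq 0$. Hence $Z(t)=e^{\mathcal{A}t}Z(0)$ is the unique mild solution of $\frac{d}{dt}Z(t)=\mathcal{A}Z(t)$, it lies in $C(0,\infty;\mathcal{H})$, and $\|Z(t)\|_{\mathcal{H}}\leq Me^{-\omega t}\|Z(0)\|_{\mathcal{H}}$; via the isomorphism of the previous paragraph this yields existence, uniqueness and $C(0,\infty;\mathcal{H}_1)$-regularity of the solution of \dref{closednodisturbance}.

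It remains to rewrite the decay of $\|Z(t)\|_{\mathcal{H}}^2$ as the claimed inequality. Expanding the norms of $\mathbf{H}$ and $\mathbf{H}_2$ gives $\|Z(t)\|_{\mathcal{H}}^2=\int_0^1\big(|u_x|^2+|u_t|^2+|\widetilde{u}_x|^2+|\widetilde{u}_t|^2\big)\,dx+\frac{1}{m+\alpha a}\big|mu_t(1,t)+au_x(1,t)+a\widetilde{u}_x(1,t)\big|^2+\beta|\widetilde{u}(0,t)|^2+m|\widetilde{u}_t(1,t)|^2$. Using $\widehat{u}=u+\widetilde{u}$, $u(0,t)=0$ (so $\widehat{u}(0,t)=\widetilde{u}(0,t)$), the identity $m\widehat{u}_t(1,t)+a\widehat{u}_x(1,t)=\big(mu_t(1,t)+au_x(1,t)+a\widetilde{u}_x(1,t)\big)+m\widetilde{u}_t(1,t)$, the fact that $mu_t(1,t)+a\widehat{u}_x(1,t)=mu_t(1,t)+au_x(1,t)+a\widetilde{u}_x(1,t)$, and the elementary bound $|p+q|^2\leq 2|p|^2+2|q|^2$, each term on the left-hand side of the asserted estimate is bounded by a fixed multiple (depending only on $m,a,\alpha,\beta$) of $\|Z(t)\|_{\mathcal{H}}^2$. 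Therefore the left-hand side is $\leq C\|Z(t)\|_{\mathcal{H}}^2\leq CM^2\|Z(0)\|_{\mathcal{H}}^2\,e^{-2\omega t}$, and one takes $M_1=CM^2\|Z(0)\|_{\mathcal{H}}^2$ (finite by the previous paragraph) and $\gamma_1=2\omega$.

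I do not expect a genuine obstacle here: all the substantive analysis --- semigroup generation, the Riesz basis property, the spectrum-determined growth condition, and the exponential bound on $e^{\mathcal{A}t}$ --- is already contained in Theorem~\ref{exponentialnodisturbance}. The only points that need a little care are (i) checking that the change-of-state map has a bounded inverse, so that $\mathcal{H}_1$-data genuinely corresponds to $\mathcal{H}$-data and $\mathcal{H}_1$-continuity is equivalent to $\mathcal{H}$-continuity, and (ii) keeping track of the two boundary terms $mu_t(1,t)+a\widehat{u}_x(1,t)$ and $m\widehat{u}_t(1,t)+a\widehat{u}_x(1,t)$ under the substitution $\widetilde{u}=\widehat{u}-u$ when comparing the two norms.
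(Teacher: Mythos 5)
Your proposal is correct and follows essentially the same route as the paper: pass to the error variable $\widetilde{u}=\widehat{u}-u$, invoke Theorem~\ref{exponentialnodisturbance} for the exponential decay of $e^{\mathcal{A}t}$ on $\mathbf{H}\times\mathbf{H}_2$, and translate the decay of $\|Z(t)\|$ back into the stated estimate using $\widehat{u}=u+\widetilde{u}$ and $|p+q|^2\leq 2|p|^2+2|q|^2$. Your extra care in checking that the change-of-state map is a bounded isomorphism with bounded inverse is a point the paper leaves implicit, but it is the same argument.
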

\begin{proof}\ \
Fix $(u(\cdot,0),u_t(\cdot,0),mu_t(1,0)+a\widehat{u}_x(1,0),$
$\widehat{u}(\cdot,0),\widehat{u}_t(\cdot,0),m\widehat{u}_t(1,0)+a\widehat{u}_x(1,0)) \in \mathbf{H}_1\times \mathbf{H}_3$.
Then we have $(u(\cdot,0),u_t(\cdot,0),mu_t(1,0)+a[u_x(1,0)+\widetilde{u}_x(1,0)],\widetilde{u}(\cdot,0),\widetilde{u}_t(\cdot,0),m\widetilde{u}_x(1,0)) \in \mathbf{H}\times \mathbf{H}_2$. By Theorem \ref{exponentialnodisturbance},
it follows that $(u(\cdot,t),u_t(\cdot,t),mu_t(1,t)+a[u_x(1,t)+\widetilde{u}_x(1,t)],\widetilde{u}(\cdot,t),\widetilde{u}_t(\cdot,t),m\widetilde{u}_x(1,t))
=e^{\mathcal{A}t}(u(\cdot,$
$0),u_t(\cdot,0),mu_t(1,0)+a[u_x(1,0)+\widetilde{u}_x(1,0)],\widetilde{u}(\cdot,0),$
$\widetilde{u}_t(\cdot,0),m\widetilde{u}_x(1,0))
\in C(0,\infty;\mathbf{H}\times \mathbf{H}_2)$ and
\begin{align*}
  &\int_0^1(|u_t(x,t)|^2+|u_{x}(x,t)|^2+|\widehat{u}_t(x,t)|^2+|\widehat{u}_{x}(x,t)|^2)dx\\
  &+\frac{1}{m}|mu_t(1,0)+a\widehat{u}_x(1,t)|^2+\frac{1}{m+a\alpha}|m\widehat{u}_t(1,0)\\
  &+a\widehat{u}_x(1,t)|^2+\beta|\widehat{u}(0,t)|^2\\
  &\leq \int_0^1(|u_t(x,t)|^2+|u_{x}(x,t)|^2+2|u_t(x,t)|^2\\
  &+2|u_{x}(x,t)|^2+2|\widetilde{u}_t(x,t)|^2+2|\widetilde{u}_{x}(x,t)|^2)dx\\
  &+\frac{1}{m}|mu_t(1,t)+a[u_x(1,t)+\widetilde{u}_x(1,t)]|^2\\
  &+\frac{2}{m+a\alpha}|mu_t(1,t)+a[u_x(1,t)+\widetilde{u}_x(1,t)]|^2\\
  &+\frac{2}{m+a\alpha}|m\widetilde{u}_t(1,t)|^2+\beta|\widetilde{u}(0,t)|^2\\
 &\leq 3\left[1+\frac{m+a\alpha}{m}\right]\|(u(\cdot,t),u_t(\cdot,t),mu_t(1,t)\\
  &+a[u_x(1,t)+\widetilde{u}_x(1,t)],\widetilde{u}(\cdot,t),\widetilde{u}_t(\cdot,t),m\widetilde{u}_x(1,t))\|^2\\
&=3\left[1+\frac{m+a\alpha}{m}\right]\|e^{\mathcal{A}t}(u(\cdot,0),u_t(\cdot,0),mu_t(1,0)\\
&+a[u_x(1,0)+\widetilde{u}_x(1,0)],\widetilde{u}(\cdot,0)\|^2\\
    &\leq M_1e^{-\gamma_1 t},
\end{align*}
where $M_1=3M^2_{\mathcal{A}}\left[1+\frac{m+a\alpha}{m}\right]\|(u(\cdot,0),u_t(\cdot,0),mu_t(1,$
$0)+a[u_x(1,0)+\widetilde{u}_x(1,0)],\widetilde{u}(\cdot,0)\|^2$, $\gamma_1=2{\color{blue}\omega_\mathcal{A}}$.
This completes the proof.
\end{proof}

\begin{remark}\em
Theorem \ref{nodisturbancemain} presents not only the exponential stability of $(u(\cdot),u_t(\cdot),\widehat{u}(\cdot,t),\widehat{u}_t(\cdot,t))$
but also the exponential stability of $\eta(t)=mu_t(1,t)+a[u_x(1,t)+\widetilde{u}_x(1,t)]$ and $\psi(t)=\widehat{u}_t(1,t)+a[u_x(1,t)+\widetilde{u}_x(1,t)]$,
because $\eta(t)$ and $\psi(t)$ are the states of the boundary ode dynamic parts of the closed-loop system.
{\color{blue}This is quite different} from the case in \cite{Guo2007} where $m=0$.
By Theorem \ref{nodisturbancemain}, with only one {\color{blue}non-collocated measurement $u_{x}(0,t)$} we can design
the estimated state based controller (\ref{control}) to exponentially stabilize the vibration cable with tip mass (\ref{beem1}).
Therefore, we improve the results in references \cite{Guo2000,Morgul1994}, where two collocated measurements $u_t(1,t)$ and $u_{xt}(1,t)$ with $u_{xt}(1,t)$ being angular velocity were used.
\end{remark}

\begin{remark}\em
In \cite{Guo2007}, Guo and Xu verified the exponential stability of closed-loop system (\ref{closednodisturbance}) with $m=0$.
By virtue of Riesz basis approach, the authors used 2.5 pages of two columns to verify the semigroup generation and exponential stability of system \cite[(3.1)]{Guo2007}. When $m\neq 0$, we transfer the closed-loop system to an equivalent system (\ref{closednodisturbance1}) which contains the original system and error systems. Although we also used Riesz basis approach and our case is more complicated ($m\neq 0$), our proof is much shorter than that in \cite{Guo2007} because (\ref{closednodisturbance1}) is easier to be dealt with:
it contains an independent subsystem related to $\widetilde{u}$. Our method {\color{blue}can be used to} simplify the verification of exponential stability in \cite{Guo2007}.
\end{remark}

\section{Stabilization in presence of internal uncertainty and external disturbance}\label{disturbancestab}

When we consider disturbance $F(t)\equiv F$, the boundary condition $u_{x}(1,t)+mu_{tt}(1,t)=-\alpha\widehat{u}_t(1,t)-a \widehat{u}_{xt}(1,t)$ of {\color{blue}the closed-loop system (\ref{closednodisturbance}) is changed to} $u_{x}(1,t)+mu_{tt}(1,t)=-\alpha\widehat{u}_t(1,t)-a \widehat{u}_{xt}(1,t)
+F$. One can see that $u(x,t)=Fx,\widehat{u}(x,t)=-F/\beta$ is a solution of the closed-loop system, which is not stable.
Therefore, when there is disturbance, the stabilizing control law should be redesigned.
Motivated by \cite{Feng2017a}, for system (\ref{beem1}) with $F(t)\neq 0$, we propose in this section an infinite-dimensional ESO without high gain described as follow
\begin{equation} \label{transfer}
\left\{\begin{array}{l}
v_{tt}(x,t)=v_{xx}(x,t),\;\; x\in (0,1), \; t>0, \\
v_x(0,t)=\gamma v_{t}(0,t)+\beta v(0,t)+u_x(0,t), \\
v_{x}(1,t)+mv_{tt}(1,t)=U(t), \;\;  t\ge 0, \\
q_{tt}(x,t)=q_{xx}(x,t),\;\; x\in (0,1), \; t>0, \\
q_x(0,t)=\gamma q_{t}(0,t)+\beta q(0,t), \; \;  t\ge 0,\\
q(1,t)=v(1,t)-u(1,t), \;\;  t\ge 0.
\end{array}\right.
\end{equation}
{\color{blue} Since it depends only on the input $u(t)$ and output $u_x(0,t),u(1,t)$, system (\ref{transfer}) is completely known.}
Although there exists disturbance, the high order angular velocity $u_{xt}(1,t)$ is not used.

Set $\widehat{v}(x,t)=v(x,t)-u(x,t)$ to derive the error equation with unknown input as follows
\begin{equation} \label{perror}
\left\{\begin{array}{l}
\widehat{v}_{tt}(x,t)=\widehat{v}_{xx}(x,t),\;\; x\in (0,1), \; t>0, \\
\widehat{v}_x(0,t)=\gamma \widehat{v}_t(0,t)+\beta \widehat{v}(0,t), \; \;   t\ge 0,\\
\widehat{v}_{x}(1,t)+m\widehat{v}_{tt}(1,t)=-F(t), \;\;  t\ge 0
\end{array}\right.
\end{equation}
which is written abstractly by
\begin{align}\label{perrorsemigroup}
    \frac{d}{dt}{\color{blue}(\widehat{v}(\cdot,t),\widehat{v}_t(\cdot,t),m\widehat{v}_t(1,t))}=\mathbf{A}_2{\color{blue}(\widehat{v}(\cdot,t),\widehat{v}_t(\cdot,t),m\widehat{v}_t(1,t))}-\mathbf{B}_{2}F(t),
\end{align}
where $\mathbf{B}_2=(0,0,1)^T$ is a bounded linear operator.
Since $\mathbf{A}_2$ is a generator of an exponentially stable $C_0$-semigroup, the role of the $v$-part of the system (\ref{transfer}) is to transfer the total disturbance $F(t)$ into an exponentially stable system that is relatively easier to be dealt with.

Similar to the proof of \cite[Lemma A.1 and A.2]{Zhou2018a}, we derive the following lemma.

\begin{lemma}\label{admissible}
Assume that $d\in L^\infty(0,\infty)$ (or $d\in L^2(0,\infty))$, $f: H^1(0,1)\times L^2(0,1)\rightarrow \mathds{R}$ is continuous and system (\ref{beem}) admits a unique bounded solution ${\color{blue}(u(\cdot,t),u_t(\cdot,t))}\in C(0,\infty;H^1(0,1)\times L^2(0,1))$. Then for any initial value ${\color{blue}(\widehat{v}(\cdot,0),\widehat{v}_t(\cdot,0),m\widehat{v}_t(1,0))}\in \mathbf{H}_2$, there exists a unique solution
to system (\ref{perror}) such that $(\widehat{v}(\cdot,t),\widehat{v}_t(\cdot,t),m\widehat{v}_t(1,t))\in C(0,\infty;\mathbf{H}_2)$ and
$\|(\widehat{v}(\cdot,t),\widehat{v}_t(\cdot,t),m\widehat{v}_t(1,t))\|_{\mathbf{H}_2}< +\infty$.
Moreover, if $\lim_{t\rightarrow \infty}f(w(\cdot,t),w_t(\cdot,t))=0$ and $d\in L^2(0,\infty)$,
then $\lim_{t\rightarrow \infty}\|(\widehat{v}(\cdot,t),\widehat{v}_t(\cdot,t),m\widehat{v}_t(1,$
$t))\|_{\mathbf{H}_2}=0$.
\end{lemma}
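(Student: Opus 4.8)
The plan is to regard (\ref{perror}) --- written abstractly in (\ref{perrorsemigroup}) --- as the exponentially stable semigroup $e^{\mathbf{A}_2 t}$ forced, through the bounded operator $\mathbf{B}_2=(0,0,1)^T$, by the scalar input $F(t)=f(u(\cdot,t),u_t(\cdot,t))+d(t)$, and then to run standard convolution estimates against the decay bound $\|e^{\mathbf{A}_2 t}\|\le M_{\mathbf{A}_2}e^{-\omega_{\mathbf{A}_2}t}$. Write $z(t)=(\widehat{v}(\cdot,t),\widehat{v}_t(\cdot,t),m\widehat{v}_t(1,t))$ and $z_0=(\widehat{v}(\cdot,0),\widehat{v}_t(\cdot,0),m\widehat{v}_t(1,0))\in\mathbf{H}_2$. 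First I would note that $t\mapsto f(u(\cdot,t),u_t(\cdot,t))$ is continuous on $[0,\infty)$, being the composition of the continuous trajectory $t\mapsto(u(\cdot,t),u_t(\cdot,t))$ with the continuous map $f$, and that it is bounded there because the trajectory of (\ref{beem}) is bounded in $H^1(0,1)\times L^2(0,1)$; together with $d\in L^\infty(0,\infty)$ (resp. $d\in L^2(0,\infty)$), this yields $F\in L^1_{\mathrm{loc}}(0,\infty)$. Since $\mathbf{B}_2$ is bounded on $\mathbf{H}_2$, the unique mild solution of (\ref{perror}) is then given by the variation of constants formula
\[
z(t)=e^{\mathbf{A}_2 t}z_0-\int_0^t e^{\mathbf{A}_2(t-s)}\mathbf{B}_2 F(s)\,ds,
\]
whose right-hand side is a continuous $\mathbf{H}_2$-valued function of $t$, so $z\in C(0,\infty;\mathbf{H}_2)$; this establishes existence and uniqueness.

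For the boundedness assertion I would estimate the two terms. The first obeys $\|e^{\mathbf{A}_2 t}z_0\|_{\mathbf{H}_2}\le M_{\mathbf{A}_2}e^{-\omega_{\mathbf{A}_2}t}\|z_0\|_{\mathbf{H}_2}$, hence is bounded. For the second, split $F=F_1+d$ with $F_1(t)=f(u(\cdot,t),u_t(\cdot,t))$; then $\big\|\int_0^t e^{\mathbf{A}_2(t-s)}\mathbf{B}_2 F(s)\,ds\big\|_{\mathbf{H}_2}\le M_{\mathbf{A}_2}\|\mathbf{B}_2\|\int_0^t e^{-\omega_{\mathbf{A}_2}(t-s)}(|F_1(s)|+|d(s)|)\,ds$. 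The $F_1$-part is bounded by $M_{\mathbf{A}_2}\|\mathbf{B}_2\|\,\omega_{\mathbf{A}_2}^{-1}\sup_{s\ge 0}|F_1(s)|$; the $d$-part is bounded the same way if $d\in L^\infty(0,\infty)$, and if $d\in L^2(0,\infty)$ it is bounded by $M_{\mathbf{A}_2}\|\mathbf{B}_2\|\,\|e^{-\omega_{\mathbf{A}_2}\cdot}\|_{L^2(0,\infty)}\|d\|_{L^2(0,\infty)}$ by the Cauchy--Schwarz inequality. Hence $\sup_{t\ge 0}\|z(t)\|_{\mathbf{H}_2}<+\infty$.

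For the convergence assertion I would use the additional hypotheses $f(u(\cdot,t),u_t(\cdot,t))\to 0$ as $t\to\infty$ and $d\in L^2(0,\infty)$. Then $\|e^{\mathbf{A}_2 t}z_0\|_{\mathbf{H}_2}\to 0$. For the $F_1$-contribution, given $\varepsilon>0$ pick $T$ with $|F_1(s)|<\varepsilon$ for $s\ge T$, and split $\int_0^t=\int_0^T+\int_T^t$: the $[0,T]$-piece is dominated by $M_{\mathbf{A}_2}\|\mathbf{B}_2\|e^{-\omega_{\mathbf{A}_2}t}\int_0^T e^{\omega_{\mathbf{A}_2}s}|F_1(s)|\,ds\to 0$ as $t\to\infty$ (for fixed $T$), while the $[T,t]$-piece is at most $M_{\mathbf{A}_2}\|\mathbf{B}_2\|\,\omega_{\mathbf{A}_2}^{-1}\varepsilon$; letting $\varepsilon\downarrow 0$ shows this contribution vanishes. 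For the $d$-contribution, split $\int_0^t=\int_0^{t/2}+\int_{t/2}^t$: the $[0,t/2]$-piece is at most $M_{\mathbf{A}_2}\|\mathbf{B}_2\|e^{-\omega_{\mathbf{A}_2}t/2}\|e^{-\omega_{\mathbf{A}_2}\cdot}\|_{L^2(0,\infty)}\|d\|_{L^2(0,\infty)}\to 0$, and the $[t/2,t]$-piece is at most $M_{\mathbf{A}_2}\|\mathbf{B}_2\|\,\|e^{-\omega_{\mathbf{A}_2}\cdot}\|_{L^2(0,\infty)}\|d\|_{L^2(t/2,\infty)}\to 0$ since the $L^2$-tail of $d$ vanishes. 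Summing the three contributions gives $\|z(t)\|_{\mathbf{H}_2}\to 0$, which is the claim.

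I expect the argument to be essentially routine semigroup perturbation theory. There are two small points to be careful about: first, recording at the outset that $t\mapsto f(u(\cdot,t),u_t(\cdot,t))$ is globally bounded --- this is exactly where the hypothesis that (\ref{beem}) has a bounded solution is used; and second, in the decay proof, handling the case in which $d$ belongs only to $L^2(0,\infty)$ (not $L^\infty$), where one splits the convolution at $t/2$ so as to exploit simultaneously the exponential kernel on $[0,t/2]$ and the vanishing $L^2$-tail of $d$ on $[t/2,t]$.
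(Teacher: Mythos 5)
Your proposal is correct and coincides with the argument the paper relies on: the paper gives no proof of Lemma \ref{admissible} at all, deferring to \cite[Lemmas A.1 and A.2]{Zhou2018a}, and those are precisely your variation-of-constants representation and convolution estimates against the exponentially stable semigroup $e^{\mathbf{A}_2 t}$ with the bounded input operator $\mathbf{B}_2$ (the tip mass is what makes $\mathbf{B}_2$ bounded here, so no admissibility argument is needed). The only step to watch is the assertion that $t\mapsto f(u(\cdot,t),u_t(\cdot,t))$ is globally bounded: continuity of $f$ together with boundedness of the trajectory does not by itself yield this in an infinite-dimensional state space (bounded sets of $H^1\times L^2$ are not precompact), but this reading of the hypotheses is exactly the one the paper itself adopts (the identical inference appears in the proof of Theorem \ref{maintheorem}), so your argument matches the intended one.
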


Set $\widehat{q}(x,t)=q(x,t)-\widehat{v}(x,t)$ to get
\begin{equation} \label{erroerrorobserver}
\left\{\begin{array}{l}
\widehat{q}_{tt}(x,t)=\widehat{q}_{xx}(x,t),\;\; x\in (0,1), \; t>0, \\
{\color{blue}\widehat{q}_x(0,t)=\gamma\widehat{q}_t(0,t)+\beta\widehat{q}(0,t)}, \widehat{q}(1,t)=0.
\end{array}\right.
\end{equation}
Consider system (\ref{erroerrorobserver}) in the space $\mathbb{H}=H^1_e(0,1)\times L^2(0,1)$, $H^1_e(0,1)=\{f\in H^1(0,1)|f(1)=0\}$ with inner product
induced norm $\|(f,g)\|^2_{\mathbb{H}}=\int_0^1[|f'(x)|^2+|g(x)|^2]dx$.
Define $\mathbb{A}(f,g)=(g,f''), \forall (f,g)\in D(\mathbb{A})=\{(f,g)\in (H_e^1\bigcap $
   $H^2(0,1))\times H^1_e(0,1)|f'(0)=\gamma g(0)+\beta f(0)\}.$
Then, system (\ref{erroerrorobserver}) can be written abstractly as
\begin{align*}
    \frac{d}{dt}(\widehat{q}(\cdot,t),\widehat{q}_t(\cdot,t))=\mathbb{A}(\widehat{q}(\cdot,t),\widehat{q}_t(\cdot,t)).
\end{align*}
By \cite{Chen1981}, $\mathbb{A}$ is generates an exponentially stable $C_0$-semigroup.
The following Lemma \ref{Riesz} tells us that the system (\ref{erroerrorobserver}) possess Riesz basis property.

\begin{lemma}\label{Riesz}
Assume that $\alpha\neq 1$. Then, $\mathbb{A}$ is a discrete operator;
there exist a sequence of generalized eigenfunctions of $\mathbb{A}$ which forms Riesz basis for $\mathbb{H}$; there exist a family of eigenvalue $\{\lambda_n\}_{n=-\infty}^\infty$ of operator $\mathbb{A}_2$ asymptotically expressed by
$\lambda_n=\frac{1}{2}\ln\bigg|\frac{\gamma-1}{\gamma+1}\bigg|+n_\gamma \pi i+O(|n|^{-1}),$
where $n_\gamma=n-1/2$ if $0<\gamma<1$ and $n_\gamma=n$ if $\gamma>1$;
the corresponding eigenfunctions is given by
$(\lambda_n^{-1}f_n,f_n)$ with $f_n(x)=\sinh\lambda_n(x-1)$ and
$F_n(x):=\big(\lambda_n^{-1}f_n'(x),f_n(x)\big)=\big(\big|\frac{\gamma-1}{\gamma+1}\big|^{(x-1)/2}e^{n_\gamma\pi i(x-1)}
+\big|\frac{\alpha+1}{\gamma-1}\big|^{(x-1)/2}e^{-n_\gamma\pi i(x-1)},  \big|\frac{\gamma-1}{\gamma+1}\big|^{(x-1)/2}e^{n_\gamma\pi i(x-1)}
-\big|\frac{\alpha+1}{\gamma-1}\big|^{(x-1)/2}$
$e^{-n_\gamma\pi i(x-1)} \big)
+O(|n|^{-1}).$
\end{lemma}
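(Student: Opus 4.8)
\textbf{Proof plan for Lemma \ref{Riesz}.}

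The plan is to mirror the three–step scheme already used in the proofs of Lemmas \ref{A2stable} and \ref{Astable}, since the operator $\mathbb{A}$ is of the same type: a wave operator on $(0,1)$ with a Dirichlet-type condition at $x=1$ and a first–order dynamic boundary condition $f'(0)=\gamma g(0)+\beta f(0)$ at $x=0$. First I would record that $\mathbb{A}^{-1}$ is compact: by the Sobolev embedding $H^2(0,1)\times H^1(0,1)\hookrightarrow\hookrightarrow H^1(0,1)\times L^2(0,1)$, together with the already-cited fact from \cite{Chen1981} that $\mathbb{A}$ generates an exponentially stable $C_0$-semigroup (hence $0\in\rho(\mathbb{A})$ and $\mathrm{Re}\,\lambda<0$ for every $\lambda\in\sigma(\mathbb{A})=\sigma_P(\mathbb{A})$). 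This gives that $\mathbb{A}$ is discrete.

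Next I would compute the eigenpairs. For $\lambda\in\sigma(\mathbb{A})$ the eigenvalue problem $\mathbb{A}(f,g)=\lambda(f,g)$ forces $g=\lambda f$ and $f''=\lambda^2 f$, so $f(x)=c_1 e^{\lambda x}+c_2 e^{-\lambda x}$. Imposing $f(1)=0$ makes $f$ a multiple of $\sinh\lambda(x-1)$, and then the remaining boundary condition $f'(0)=(\gamma\lambda+\beta)f(0)$ yields the characteristic equation; to leading order in $\lambda$ it reduces to $e^{-2\lambda}=\frac{\gamma-1}{\gamma+1}+O(|\lambda|^{-1})$, and Rouché's theorem then gives the stated asymptotics $\lambda_n=\tfrac12\ln\bigl|\tfrac{\gamma-1}{\gamma+1}\bigr|+n_\gamma\pi i+O(|n|^{-1})$ with the indicated parity of $n_\gamma$ (the half-integer shift when $0<\gamma<1$ coming from the sign of $\frac{\gamma-1}{\gamma+1}$). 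Substituting this expansion into $f_n(x)=\sinh\lambda_n(x-1)$ and $\lambda_n^{-1}f_n'(x)=\cosh\lambda_n(x-1)$, and writing $\cosh,\sinh$ in exponential form, produces the claimed asymptotic form of $F_n(x)=(\lambda_n^{-1}f_n'(x),f_n(x))$ up to $O(|n|^{-1})$; here one uses $e^{\pm\lambda_n(x-1)}=\bigl|\tfrac{\gamma-1}{\gamma+1}\bigr|^{\pm(x-1)/2}e^{\pm n_\gamma\pi i(x-1)}(1+O(|n|^{-1}))$.

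For the Riesz basis conclusion I would set up the usual comparison. Define $P(x)$ to be the $2\times2$ matrix whose columns are the coefficient vectors appearing in the leading term of $F_n$ (with the appropriate factor of $i$ in the case $0<\gamma<1$, exactly as in the proof of Lemma \ref{A2stable}); since $\gamma\neq1$, $P(x)$ and $P(x)^{-1}$ are bounded on $(L^2(0,1))^2$, so the multiplication operator $w\mapsto P(x)w$ is boundedly invertible, and hence $\{P(x)(e^{n_\gamma\pi i(x-1)},e^{-n_\gamma\pi i(x-1)})\}_{n\in\mathbb{Z}}$ is a Riesz basis for $(L^2(0,1))^2$ because $\{(e^{n_\gamma\pi i(x-1)},e^{-n_\gamma\pi i(x-1)})\}_{n\in\mathbb{Z}}$ is (a shifted/reflected exponential system is still a Riesz basis of $(L^2(0,1))^2$). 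Then $F_n(x)=P(x)(e^{n_\gamma\pi i(x-1)},e^{-n_\gamma\pi i(x-1)})+O(|n|^{-1})$, so $\sum_{|n|>N}\|F_n-(\text{leading term})\|^2<\infty$; by \cite[Lemma 1]{Guo2001} one may replace finitely many leading terms by the $F_n$ and still have a Riesz basis for $(L^2(0,1))^2$. Pulling this back through the isometric isomorphism $(f,g)\mapsto(f',g)$ from $\mathbb{H}$ onto $(L^2(0,1))^2$, and invoking \cite[Theorem 1]{Guo2001} with the facts that $\{(\lambda_n^{-1}f_n,f_n)\}$ is a sequence of (generalized) eigenvectors of the discrete operator $\mathbb{A}$ and that it is quadratically close to a Riesz basis, gives a sequence of generalized eigenfunctions of $\mathbb{A}$ forming a Riesz basis for $\mathbb{H}$. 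The only mild obstacle is bookkeeping: getting the parity of $n_\gamma$ and the precise entries of the leading matrix right, and checking that the Dirichlet condition at $x=1$ (rather than $x=0$) does not spoil the completeness of the shifted exponential system — both are handled exactly as in the two preceding lemmas, so no genuinely new difficulty arises.
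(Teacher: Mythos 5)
The paper actually gives no proof of Lemma \ref{Riesz} at all — it is stated without argument, evidently because it is the exact analogue of Lemma \ref{A2stable} — and your plan follows precisely the three-step template the author uses for Lemmas \ref{A2stable} and \ref{Astable} (compact inverse via Sobolev embedding and exponential stability from \cite{Chen1981}, explicit eigenpairs and Rouch\'e asymptotics, then quadratic closeness to a reference Riesz basis plus \cite[Lemma 1 and Theorem 1]{Guo2001}), so in approach it is exactly what the author intends. One slip to fix: with $f(x)=\sinh\lambda(x-1)$ the boundary condition $f'(0)=(\gamma\lambda+\beta)f(0)$ gives $\lambda\cosh\lambda=-(\gamma\lambda+\beta)\sinh\lambda$, i.e. $e^{2\lambda}=\frac{(\gamma-1)\lambda+\beta}{(\gamma+1)\lambda+\beta}=\frac{\gamma-1}{\gamma+1}+O(|\lambda|^{-1})$, not $e^{-2\lambda}=\cdots$ as you wrote; your version would force $\mathrm{Re}\,\lambda_n=-\frac12\ln\bigl|\frac{\gamma-1}{\gamma+1}\bigr|>0$, contradicting the exponential stability you invoked, whereas the corrected equation yields the asymptotics you (correctly) state in the end. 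With that sign repaired the rest of the plan goes through as described.
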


The following lemma is due to Feng and Guo \cite{Feng2017a}.
\begin{lemma}\label{zestimate}
Assume that $(\widehat{q}(\cdot,0),\widehat{z}_t(\cdot,0))\in D(\mathbb{A})$. The solution of (\ref{erroerrorobserver}) satisfies $|\widehat{q}_{x}(1,t)|\leq Me^{-\mu t}$ for some constant $M,\; \mu>0$.
\end{lemma}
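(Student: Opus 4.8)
The plan is to use the exponential stability of the $C_0$-semigroup $e^{\mathbb{A}t}$ on $\mathbb{H}$ (recalled above from \cite{Chen1981}) at two levels of regularity, and then to recover the boundary trace $\widehat{q}_x(1,t)$ from interior $L^2$-norms, exploiting that $\widehat{q}(1,t)=0$. Write $\|e^{\mathbb{A}t}\|\le M_0 e^{-\mu_0 t}$ for some $M_0,\mu_0>0$, and set $X_0:=(\widehat{q}(\cdot,0),\widehat{q}_t(\cdot,0))\in D(\mathbb{A})$, so that $(\widehat{q}(\cdot,t),\widehat{q}_t(\cdot,t))=e^{\mathbb{A}t}X_0$ is a classical solution of (\ref{erroerrorobserver}), hence $\widehat{q}(\cdot,t)\in H^2(0,1)\cap H^1_e(0,1)$ and $\widehat{q}_{tt}(\cdot,t)\in L^2(0,1)$, so all traces below are well defined.

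First I would differentiate in time, using that $e^{\mathbb{A}t}$ commutes with $\mathbb{A}$ on $D(\mathbb{A})$: since $X_0\in D(\mathbb{A})$, $(\widehat{q}_t(\cdot,t),\widehat{q}_{tt}(\cdot,t))=\frac{d}{dt}e^{\mathbb{A}t}X_0=e^{\mathbb{A}t}\mathbb{A}X_0$, which yields, with $\|\cdot\|$ the $\mathbb{H}$-norm,
\begin{align*}
\int_0^1\big(|\widehat{q}_x(x,t)|^2+|\widehat{q}_t(x,t)|^2\big)\,dx&\le M_0^2 e^{-2\mu_0 t}\|X_0\|^2,\\
\int_0^1\big(|\widehat{q}_{xt}(x,t)|^2+|\widehat{q}_{tt}(x,t)|^2\big)\,dx&\le M_0^2 e^{-2\mu_0 t}\|\mathbb{A}X_0\|^2 .
\end{align*}

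Next I would recover the trace. Because $\widehat{q}(1,t)=0$, hence also $\widehat{q}_t(1,t)=0$, the fundamental theorem of calculus gives $\widehat{q}(0,t)=-\int_0^1\widehat{q}_x(x,t)\,dx$ and $\widehat{q}_t(0,t)=-\int_0^1\widehat{q}_{xt}(x,t)\,dx$; integrating $\widehat{q}_{xx}=\widehat{q}_{tt}$ over $(0,1)$ and inserting the boundary condition $\widehat{q}_x(0,t)=\gamma\widehat{q}_t(0,t)+\beta\widehat{q}(0,t)$,
\[
\widehat{q}_x(1,t)=\widehat{q}_x(0,t)+\int_0^1\widehat{q}_{tt}(x,t)\,dx=\gamma\,\widehat{q}_t(0,t)+\beta\,\widehat{q}(0,t)+\int_0^1\widehat{q}_{tt}(x,t)\,dx .
\]
By the Cauchy--Schwarz inequality each of the three terms on the right is bounded by one of the interior $L^2$-norms estimated above, so $|\widehat{q}_x(1,t)|\le M e^{-\mu t}$ with $\mu:=\mu_0$ and $M:=M_0\big(\beta\|X_0\|+(\gamma+1)\|\mathbb{A}X_0\|\big)$, which is the assertion.

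The hypothesis $X_0\in D(\mathbb{A})$ is used only in the second interior estimate, to make sense of $\mathbb{A}X_0$ and to guarantee $\widehat{q}_{tt}(\cdot,t)\in L^2(0,1)$ together with the existence of all boundary traces; the rest is routine. I do not expect a genuine obstacle here, since the lemma is essentially regularity bookkeeping on top of the already established exponential stability. (An alternative route is to observe that $f\mapsto f'(1)$ is an admissible observation operator for $e^{\mathbb{A}t}$, which gives $L^2_{loc}$-in-time exponential decay of $\widehat{q}_x(1,\cdot)$ directly, and then upgrade to a pointwise bound; but the two-level argument above is shorter and self-contained.)
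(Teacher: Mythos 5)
Your proof is correct. Note that the paper itself gives no argument for this lemma at all --- it simply attributes it to Feng and Guo \cite{Feng2017a} --- and your two-level regularity argument (exponential decay of $e^{\mathbb{A}t}X_0$ in $\mathbb{H}$ plus exponential decay of $e^{\mathbb{A}t}\mathbb{A}X_0=(\widehat{q}_t,\widehat{q}_{tt})$, followed by recovery of the boundary trace from the interior $L^2$-norms) is exactly the standard proof of such trace-decay lemmas, so you have in effect supplied the omitted proof. A minor simplification: you do not actually need the boundary condition at $x=0$, since $\widehat{q}_x(1,t)=\widehat{q}_x(x,t)+\int_x^1\widehat{q}_{xx}(s,t)\,ds$ integrated over $x\in(0,1)$ gives $|\widehat{q}_x(1,t)|\le\|\widehat{q}_x(\cdot,t)\|_{L^2}+\|\widehat{q}_{tt}(\cdot,t)\|_{L^2}$ directly, but your version is equally valid.
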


\begin{remark} \em
On one hand, {\color{blue}since by (\ref{erroerrorobserver}) we obtain $\widehat{q}(1,t)=0$}, it follows from Lemma \ref{zestimate} that $F(t)=$
$-\widehat{v}_{x}(1,t)-m\widehat{v}_{tt}(1,t)
=-q_{x}(1,t)-mq_{tt}(1,t)+{\color{blue}\widehat{q}_{x}(1,t)}+m\widehat{q}_{tt}(1,t)=-q_{x}(1,t)-mq_{tt}(1,t)+{\color{blue}\widehat{q}_{x}(1,t)}\approx -q_{x}(1,t)-mq_{tt}(1,t)$ provided $(\widehat{q}(\cdot,0),\widehat{q}_t(\cdot,0))\in D(\mathbb{A})$.
This indicates that $-q_{x}(1,t)-mq_{tt}(1,t)$ is an estimate of total disturbance $F(t)$.
On the other hand, since (\ref{erroerrorobserver}) decays exponentially, we obtain  $u(\cdot,t)=v(\cdot,t)-q(\cdot,t)+\widehat{q}(\cdot,t)\approx v(\cdot,t)-q(\cdot,t)$. This implies that $v(\cdot,t)-q(\cdot,t)$ is an estimate of $w(\cdot,t)$.
The combination of the two hands tells us that the system (\ref{transfer}) estimates both the total disturbance and original state.
This is the reason why we call the system (\ref{transfer}) infinite-dimensional ESO for (\ref{beem}).
We shall show in the rest of the present section that the disturbance estimator (\ref{transfer}) is enough to exponentially stabilize system (\ref{beem}) for general initial state $(\widehat{q}(\cdot,0),\widehat{q}_t(\cdot,0))\in \mathbb{H}$.
\end{remark}

Since by \cite{Morgul1994} the state feedback $u(t)=-\alpha u_t(1,t)-a u_{xt}(1,t)$ exponentially stabilizes system (\ref{beem1}),
it is natural to design an estimated state-based controller
\begin{align}\label{feedback11}
    \nonumber&U(t)=q_{x}(1,t)+mq_{tt}(1,t)-\alpha[v_t(1,t)-q_t(1,t)]\\
    &-a [v_{xt}(1,t)-q_{xt}(1,t)],
\end{align}
where $q_{x}(1,t)+mq_{tt}(1,t)$ is used to cancel the total disturbance $F(t)$,  $v_t(1,t)-q_t(1,t)$ and $v_{xt}(1,t)-q_{xt}(1,t)$ are respectively applied to estimate $u_t(1,t)$ and $u_{xt}(1,t)$.
\begin{remark}\label{xiangdui}\em
In \cite{Mei2020b,Zhou2018a,Zhou2018b}, infinite-dimensional ESOs were used to estimate
the total disturbance only; the authors design additional ESO-based Luenberger state observers in order to derive the estimations of original states.
In our control law (\ref{feedback11}), we directly estimated state from the infinite-dimensional ESO.
Our control strategy is more concise and energy-saving, and it can help one simplify the design of references \cite{Mei2020b,Zhou2018a,Zhou2018b}.
\end{remark}

The closed-loop system of (\ref{beem}) under the controller (\ref{feedback11}) is
\begin{equation} \label{perror110}
\left\{\begin{array}{l}
u_{tt}(x,t)=u_{xx}(x,t),\;\; x\in (0,1), \; t>0, \\
u(0,t)=0,
u_{x}(1,t)+mu_{tt}(1,t)=q_{x}(1,t)+mq_{tt}(1,t)\\
-\alpha[v_t(1,t)
-q_t(1,t)]-a [v_{xt}(1,t)-q_{xt}(1,t)]
+F(t), \\
v_{tt}(x,t)=v_{xx}(x,t),\;\; x\in (0,1), \; t>0, \\
v_x(0,t)=\gamma v_{t}(0,t)+\beta v(0,t)+u_x(0,t), \; \;  t\ge 0,\\
v_{x}(1,t)+mv_{tt}(1,t)=q_{x}(1,t)+mq_{tt}(1,t)\\
-\alpha[v_t(1,t)-q_t(1,t)]
-a [v_{xt}(1,t)-q_{xt}(1,t)], \;\;  t\ge 0, \\
q_{tt}(x,t)=q_{xx}(x,t),\;\; x\in (0,1), \; t>0, \\
q_x(0,t)=\gamma q_{t}(0,t)+\beta q(0,t), \; \;  t\ge 0,\\
q(1,t)=v(1,t)-u(1,t), \;\;  t\ge 0,
\end{array}\right.
\end{equation}
which is equivalent to
\begin{equation} \label{perror110closed}
\left\{\begin{array}{l}
u_{tt}(x,t)=u_{xx}(x,t),\;\; x\in (0,1), \; t>0, \\
u(0,t)=0, u_{x}(1,t)+mu_{tt}(1,t)=-\alpha u_{t}(1,t)\\
-au_{xt}(1,t)+a \widehat{q}_{xt}(1,t)+\widehat{q}_x(1,t), \;\;  t\ge 0,\\
\widehat{v}_{tt}(x,t)=\widehat{v}_{xx}(x,t),\;\; x\in (0,1), \; t>0, \\
\widehat{v}_x(0,t)=\gamma \widehat{v}_t(0,t)+\beta \widehat{v}(0,t), \; \;   t\ge 0,\\
\widehat{v}_{x}(1,t)+m\widehat{v}_{tt}(1,t)=-F(t), \;\;  t\ge 0,\\
\widehat{q}_{tt}(x,t)=\widehat{q}_{xx}(x,t),\;\; x\in (0,1), \; t>0, \\
\widehat{q}_x(0,t)=\gamma\widehat{q}_t(0,t)+\beta\widehat{q}(0,t),
\widehat{q}(1,t)=0.
\end{array}\right.
\end{equation}

Our target in the rest of this section is to show that the state $(u(\cdot,t),u_t(\cdot,t))$ of the closed-loop system (\ref{perror110})
is exponentially stable while guaranteeing the boundedness of the other variables.
Since it does not involve the total disturbance $F(t)$,
we choose to firstly consider the $(w,\widehat{z})$-part of (\ref{perror110closed}) described by
\begin{equation} \label{wzwanclosed}
\left\{\begin{array}{l}
u_{tt}(x,t)=u_{xx}(x,t),\;\; x\in (0,1), \; t>0, \\
u(0,t)=0, u_{x}(1,t)+mu_{tt}(1,t)=-a u_{xt}(1,t)\\
-\alpha u_t(1,t)
+a \widehat{q}_{xt}(1,t)+\widehat{q}_x(1,t), \;\;  t\ge 0,\\
\widehat{q}_{tt}(x,t)=\widehat{q}_{xx}(x,t),\;\; x\in (0,1),  \\
\widehat{q}_x(0,t)=\gamma\widehat{q}_t(0,t)+\beta\widehat{q}(0,t),
\widehat{q}_x(1,t)=0.
\end{array}\right.
\end{equation}

We consider system (\ref{wzwanclosed}) in Hilbert state space $\mathcal{X}_1=\mathbf{H}$
$\times\mathbb{H}$.
Define the operator $\mathcal{A}_1:D(\mathcal{A}_1)\subset\mathcal{X}_1\rightarrow \mathcal{X}_1$ by
$\mathcal{A}_1(f,g,\eta,$
$\phi,\psi)=(g,f'',{\color{blue}-f'(1)-\alpha g(1)+\phi'(1)},\psi,\phi''),$
$(f,g,\phi,\psi)\in D(\mathcal{A}_1)=\{(f,g,\eta,\phi,\psi)\in (H^1_E(0,1)\bigcap $
$H^2(0,1))\times H^1_E(0,1)\times D(\mathbb{A})|
{\color{blue}\eta=a f'(1)+mg(1)-a \phi'(1)}\}$.
System (\ref{wzwanclosed}) is abstractly described by
$$\frac{d}{dt}X(t)=\mathcal{A}_1X(t),$$
where $X(t)=(u(\cdot,t),u_t(\cdot,t),a u_{x}(1,t)$
$+mu_t(1,t)-a\widehat{q}_{x}(1,t),\widehat{q}(\cdot,t),\widehat{q}_t(\cdot,t)).$
One can easily verify that the operator $\mathcal{A}_1$ is not dissipative,
 we shall adopt Riesz basis approach to verify the stability.

\begin{theorem}\label{exponential00}
Assume that $\gamma\neq 1$, $m\neq a$ and $m\neq a\gamma$. Then, system (\ref{wzwanclosed}) is governed by an exponentially stable $C_0$-semigroup.
\end{theorem}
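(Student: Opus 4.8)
The plan is to run the argument of Theorem \ref{exponentialnodisturbance} with $\mathbb{A}$ in place of $\mathbf{A}_2$, since $\mathcal{A}_1$ has the same cascade structure: an autonomous $\widehat{q}$-subsystem, governed here by $\mathbb{A}$, which drives the $u$-subsystem only through the boundary term $\phi'(1)$ at $x=1$. First I would note that $\mathcal{A}_1^{-1}$ exists and is compact on $\mathcal{X}_1$ (Sobolev embedding), so $\sigma(\mathcal{A}_1)=\sigma_P(\mathcal{A}_1)$ is discrete. Next, exploiting the triangular structure I would prove $\sigma(\mathcal{A}_1)=\sigma(\mathbf{A})\cup\sigma(\mathbb{A})$: if $(f,g,\eta,\phi,\psi)$ is an eigenfunction of $\mathcal{A}_1$ with $(\phi,\psi)\neq 0$, then $(\phi,\psi)$ is an eigenfunction of $\mathbb{A}$ for the same eigenvalue, so $\lambda\in\sigma(\mathbb{A})$; if $(\phi,\psi)=0$ then $(f,g,\eta)\in D(\mathbf{A})$ is a nonzero eigenfunction of $\mathbf{A}$, so $\lambda\in\sigma(\mathbf{A})$; the reverse inclusion is immediate since $\mathbf{A}$ and $\mathbb{A}$ are discrete (Lemmas \ref{Astable} and \ref{Riesz}).

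The core of the proof is the Riesz basis property of the generalized eigenfunctions of $\mathcal{A}_1$ in $\mathcal{X}_1=\mathbf{H}\times\mathbb{H}$. By Lemma \ref{Astable}, the generalized eigenfunctions of $\mathbf{A}$ embedded in $\mathcal{X}_1$ with zero $\widehat{q}$-component, together with those of $\mathbb{A}$ from Lemma \ref{Riesz} embedded with zero $u$-component, form a Riesz basis for $\mathcal{X}_1$; but, exactly as noted in Remark \ref{zhishu}, the second family is not made of genuine generalized eigenfunctions of $\mathcal{A}_1$. So I would compute the true ones. For an eigenvalue $\lambda_n$ of $\mathbb{A}$ with eigenfunction-component $\phi_n(x)=\sinh\lambda_n(x-1)$ (so $\phi_n(1)=0$, $\phi_n'(1)=\lambda_n$), the corresponding eigenfunction of $\mathcal{A}_1$ has $u$-component $f_{1n}$ satisfying $f_{1n}''=\lambda_n^2 f_{1n}$, $f_{1n}(0)=0$, together with the boundary identity $(1+a\lambda_n)f_{1n}'(1)+(m\lambda_n^2+\alpha\lambda_n)f_{1n}(1)=(1+a\lambda_n)\phi_n'(1)$, obtained by eliminating $\eta$ from $\lambda_n\eta=-f_{1n}'(1)-\alpha\lambda_n f_{1n}(1)+\phi_n'(1)$ and $\eta=af_{1n}'(1)+m\lambda_n f_{1n}(1)-a\phi_n'(1)$. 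Hence $f_{1n}(x)=E_n(e^{\lambda_n x}-e^{-\lambda_n x})$ with $E_n$ an explicit quotient, and with the asymptotics $\lambda_n=\frac12\ln\left|\frac{\gamma-1}{\gamma+1}\right|+n_\gamma\pi i+O(|n|^{-1})$ of Lemma \ref{Riesz} one reads off the leading behaviour of $E_n$ and hence of $f_{1n}$. This lets me express each genuine generalized eigenfunction of $\mathcal{A}_1$ as $W\big(\text{reference basis element}\big)+O(|n|^{-1})$ for one fixed bounded, boundedly invertible block matrix $W=\left(\begin{smallmatrix} I & V\\ 0 & I\end{smallmatrix}\right)$, the finitely many small-index terms being absorbed via \cite[Lemma 1]{Guo2001}; Bari's theorem together with \cite[Theorem 1]{Guo2001}, applied as in Theorem \ref{exponentialnodisturbance}, then yields the Riesz basis of generalized eigenfunctions of $\mathcal{A}_1$ for $\mathcal{X}_1$.

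Finally, the Riesz basis property gives $C_0$-semigroup generation and the spectrum-determined growth condition for $\mathcal{A}_1$. Since $\mathbf{A}$ (by \cite{Morgul1994} and Lemma \ref{Astable}) and $\mathbb{A}$ (by \cite{Chen1981} and Lemma \ref{Riesz}) generate exponentially stable $C_0$-semigroups, $\sigma(\mathcal{A}_1)=\sigma(\mathbf{A})\cup\sigma(\mathbb{A})$ lies in $\{{\rm Re}\,\lambda\le-\varepsilon\}$ for some $\varepsilon>0$, so $e^{\mathcal{A}_1 t}$ is exponentially stable.

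I expect the main obstacle to be the same one as in Theorem \ref{exponentialnodisturbance}: explicitly identifying the genuine generalized eigenfunctions of $\mathcal{A}_1$, checking that their deviation from the reference Riesz basis is $\ell^2$-summable so that Bari's theorem applies, and verifying that $W$ (equivalently its off-diagonal block $V$) is invertible. The three hypotheses enter precisely here: $m\neq a$ for the Riesz basis of $\mathbf{A}$, $\gamma\neq 1$ for that of $\mathbb{A}$, and $m\neq a\gamma$ for the non-degeneracy of $V$ --- the denominator controlling $E_n$ is asymptotically $(a+m)e^{\lambda_n}+(a-m)e^{-\lambda_n}$ with $e^{2\lambda_n}\to\frac{\gamma-1}{\gamma+1}$, which stays bounded away from $0$ exactly when $a\gamma\neq m$. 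A minor technical point is the finitely many indices with $\lambda_n\in\sigma(\mathbf{A})$ or $1+a\lambda_n=0$, which are discarded in the usual way.
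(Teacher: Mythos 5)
Your proposal follows essentially the same route as the paper's proof: compact inverse and the spectral decomposition $\sigma(\mathcal{A}_1)=\sigma(\mathbf{A})\cup\sigma(\mathbb{A})$, explicit computation of the genuine eigenfunctions $f_{1n}$ on the $\mathbb{A}$-branch (your $E_n(e^{\lambda_n x}-e^{-\lambda_n x})$ is just a reparametrization of the paper's $F\sinh\lambda_n(x-1)+G\cosh\lambda_n(x-1)$), the block-unipotent comparison operator $W$ (the paper's $Q$ with block $J=\frac{a}{a\gamma-m}(\cdots)$), and Bari's theorem with \cite[Theorem 1]{Guo2001} to get the Riesz basis, spectrum-determined growth, and exponential stability. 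You also correctly locate where each of the hypotheses $m\neq a$, $\gamma\neq 1$, $m\neq a\gamma$ enters, matching the paper's argument.
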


\begin{proof}\ \
By the same procedure as the proof of Theorem \ref{exponentialnodisturbance}, we can obtain that the operator $\mathcal{A}_1$ has bounded and compact inverse on $\mathcal{X}_1$ and $\sigma(\mathcal{A}_1)=\sigma(\mathbf{A})\bigcup \sigma(\mathbb{A})$.
Next, we shall show that the generalized eigenfunction of $\mathcal{A}_1$ forms Riesz basis for $\mathcal{X}_1$.
Let $\{\lambda_{n}\}_{n=-\infty}^\infty$ and $\{\lambda_{1n}\}_{n=-\infty}^\infty$ be the eigenvalues of $\mathbb{A}$ and $\mathbf{A}$,  respectively. Let $\{(\lambda_n^{-1}\phi_{n},\phi_{n})\}_{n=-\infty}^\infty$ and $\{(\lambda_{1n}^{-1}f_{n},f_{n},a f_n'(1)+m\lambda_{1n}f_n(1))\}_{n=-\infty}^\infty$ be the generalized eigenfunctions corresponding to $\{\lambda_{n}\}_{n=1}^\infty$ and $\{\lambda_{1n}\}_{n=-\infty}^\infty$ that form Riesz basises for $\mathbb{H}$ and $ \mathbf{H}$, respectively. As a result, $\{(0,0,0,\lambda_n^{-1}\phi_{n},\phi_{n})\}_{n=-\infty}^\infty\bigcup \{(\lambda_{1n}^{-1}f_{n},f_{n},$
$a f_n'(1)+m\lambda_{1n}f_n(1),0,0)\}_{n=-\infty}^\infty$ forms a Riesz basis for $\mathcal{X}_1$, which is equivalent to that $\{(0,0,0,\lambda_n^{-1}\phi'_{n},$
$\phi_{n})\}_{n=-\infty}^\infty\bigcup \{(\lambda_{1n}^{-1}$
$f'_{n},f_{n},a f_n'(1)+m\lambda_{1n}f_n(1),0,0)\}_{n=-\infty}^\infty$  forms a Riesz basis for $(L^2(0,1))^2\times \mathds{C}\times (L^2(0,1))^2$.

Let $\lambda\in \sigma(\mathcal{A})$ and $(\lambda^{-1}f,f,a f'(1)+m\lambda f(1),\lambda^{-1}\phi,\phi)$ be the corresponding eigenfunction. If $\phi=0$, then $(\lambda^{-1}f,f,a f'(1)+m\lambda f(1))\neq 0$ and $\lambda\in \sigma(\mathbf{A})$.
Hence in this case the eigenvalues $\{\lambda_{1n}\}_{n=1}^\infty$ corresponds the eigenfunctions $\{(\lambda_{1n}^{-1}f_n,f_n,a f'_n(1)+m\lambda_{1n} f_n(1),0,0)\}_{n=1}^\infty$.

If $\phi\neq 0$, then $\lambda\in \sigma(\mathbb{A})$. The eigenvalues $\{\lambda_{n}\}_{n=1}^\infty$ corresponds
the eigenfunction $(\lambda_n^{-1}\phi_{n},\phi_{n})$
$\}_{n=1}^\infty$ of $\mathbb{A}$, where
$\phi_{n}=\sinh\lambda_n(x-1).$
Denote by $(\lambda_n^{-1}f_{1n},f_{1n},a f'_{1n}(1)-a \phi'_{n}(1)+m\lambda_{n} f_{1n}(1),$
$\lambda_n^{-1}\phi_{n},\phi_{n})$ the eigenfunction of $\mathcal{A}_1$ corresponding to $\lambda_{n}$. Then we have
$f''_{1n}(x)=\lambda_nf_{1n}(x),
        f_{1n}(0)=0,
       (1+a\lambda_n)f'_{1n}(1)+(m\lambda^2_n+\alpha\lambda_n)f_{1n}=(a\lambda_n+1)\phi'_n(1)$
whose solution is of the form
\begin{align*}
    f_{1n}(x)=F\sinh\lambda_n(x-1)+G\cosh\lambda_n(x-1).
\end{align*}
Combine (\ref{f1n}) and the boundary conditions to derive
$F=[(1+a\lambda_n)\cosh\lambda_n]/h_n,
    G=[(1+a\lambda_n)\sinh\lambda_n]/h_n,$ where $h_n=(1+a\lambda_n)\cosh\lambda_n+(m\lambda_n+\alpha)\sinh\lambda_n.$
Use the fact that $\lambda_n\cosh\lambda_n=-(\gamma\lambda_n+\beta)\sinh\lambda_n$ to get
$F=\frac{a\gamma}{a\gamma -m}+O(|n|^{-1})$
and $G=-\frac{a}{a\gamma -m}+O(|n|^{-1}).$
Denote $Q=\left(
            \begin{array}{cc}
              I_3 & J \\
              0 & I_2 \\
            \end{array}
          \right)
$ with $J=\frac{a}{a\gamma-m}\left(
                              \begin{array}{ccccc}
                                \gamma & -1& 0 \\
                                 -1 &\gamma& 0  \\
                                 0 &0&0 \\
                              \end{array}
                            \right)^T.$
Then $Q$ is a bounded linear operator and it has bounded inverse.
Moreover, we obtain the following relations
\begin{align}
\nonumber    &(\lambda_{1n}^{-1}f'_n,f_n,a f'_n(1)+m\lambda_{1n} f_n(1),0,0)^T\\
\label{guanxi3}&=Q(\lambda_{1n}^{-1}f'_n,f_n,a f'_n(1)+m\lambda_{1n} f_n(1),0,0)^T,\\
 \nonumber    &(\lambda_n^{-1}f'_{1n},f_{1n},a f'_{1n}(1)-a \phi'_{n}(1)+m\lambda_{n} f_{1n}(1),\lambda_n^{-1}\phi'_{n},\\
\label{guanxi4}&\phi_{n})=Q(0,0,0,\lambda_n^{-1}\phi'_{n},\phi_{n})+O(n^{-1}).
\end{align}
Then, by Bari's theorem the sequence $\{(\lambda_{1n}^{-1}f'_n,f_n,$
$a f'_n(1)+m\lambda_{1n} f_n(1),0,0)\}_{n=-\infty}^\infty \bigcup \{(\lambda_n^{-1}f'_{1n},f_{1n},$
$a f'_{1n}(1)-a \phi'_{n}(1)+m\lambda_{n} f_{1n}(1),\lambda_n^{-1}\phi'_{n},\phi_{n})\}_{n=-\infty}^\infty$
forms Riesz basis for $\left(L^2(0,1)\right)^2\times \mathds{C}\times \left(L^2(0,1)\right)^2$,
which is equivalent to that $\{(\lambda_{1n}^{-1}f_n,f_n,a f'_n(1)+m\lambda_{1n} f_n(1),0,0)\}_{n=-\infty}^\infty \bigcup$
$ \{(\lambda_n^{-1}f'_{1n},f_{1n},a f_{1n}(1)-a \phi'_{n}(1)+m\lambda_{n} f_{1n}(1),\lambda_n^{-1}\phi_{n},\phi_{n})$
$\}_{n=-\infty}^\infty$
forms Riesz basis for $\mathcal{X}_1$.

The semigroup generation and spectrum-determined growth condition of $\mathcal{A}_1$ are directly derived by the Riesz basis property.
Since $ \mathbf{A}$ and $\mathbb{A}$ generate exponentially stable $C_0$-semigroups and spectrum-determined growth condition holds, $e^{\mathcal{A}_1t}$ is exponentially stable.
\end{proof}

\begin{remark} \em
In the proof of Theorem \ref{exponential00}, although it forms a Riesz basis
for $\mathcal{H}_1$,
{\color{blue} not all the elements of the sequence $\{(0,0,0,\lambda_n^{-1}\phi_{n},$
$\phi_{n})\}_{n=-\infty}^\infty\bigcup \{(\lambda_{1n}^{-1}f_{n},f_{n},a f_n'(1)+m\lambda_{1n}f_n(1),0,0)\}_{n=-\infty}^\infty$  are generalized eigenfunctions of $\mathcal{A}$}.
In order to overcome this difficulty,  our key step is to find out the relations (\ref{guanxi3}) and (\ref{guanxi4}).
\end{remark}

We consider the closed-loop system (\ref{perror110}) in Hilbert state space $\mathfrak{X}=\mathbf{H}_1\times \mathbf{H}_2\times \mathbb{H}_1$,
where $\mathbb{H}_1=H^1(0,1)\times L^1(0,1)$ with norm {\color{blue}$\|(f,g)\|^2_{\mathbb{H}_1}=\int_0^1[|f'(x)|^2+|g(x)|^2]dx+\beta|f(0)|^2$}.
The state of the closed-loop system (\ref{perror110}) is $(u(\cdot,t),u_t(\cdot,t),mu_t(1,t)-mq_t(1,t)+a(v_x(1,t)-q_x(1,t)),v(\cdot,t),v_t(\cdot,t),
mv_t(1,t)-mq_t(1,t)+a[v_x(1,t)-q_x(1,t)],q(\cdot,t),q_t(\cdot,t))$.

\begin{theorem}\label{maintheorem}
Suppose that $d\in L^\infty(0,\infty)$ (or $d\in L^2(0,\infty)$) and $f:H^1(0,1)\times L^2(0,1)\rightarrow \mathds{R}$ is continuous.
For any initial value $(u(\cdot,0),u_t(\cdot,0),mu_t(1,0)-mq_t(1,0)+a(v_x(1,0)-q_x(1,0)),v(\cdot,0),v_t(\cdot,$
$0),mv_t(1,0)-mq_t(1,0)+a[v_x(1,0)-q_x(1,0)],q(\cdot,0),q_t(\cdot,0)) \in \mathfrak{X}$ with $q(1,0)=v(1,0)-u(1,0)$, then there exists a unique solution to system (\ref{perror110}) such that
$(u(\cdot,t),u_t(\cdot,t),mu_t(1,t)$
$-mq_t(1,t)+a(v_x(1,t)-q_x(1,t)),v(\cdot,t),v_t(\cdot,t),
mv_t(1,t)-mq_t(1,t)+a[v_x(1,t)-q_x(1,t)],q(\cdot,t),q_t(\cdot,t)) \in   C(0,\infty;$
$\mathfrak{X})$  satisfy $q(1,t)=v(1,t)-u(1,t)$,
\begin{align}
  \nonumber&\int_0^1[|u_t(x,t)|^2+|u_{x}(x,t)|^2]{\color{blue} dx}+\frac{1}{m}|mu_t(1,t)\\
 \label{P1}&+a[v_x(1,t)-q_x(1,t)]|^2\leq M_2e^{-\gamma_2 t},\\
  \nonumber&\sup_{t\geq 0}\int_0^1[|u_t(x,t)|^2+|u_{x}(x,t)|^2]{\color{blue} dx}+\frac{1}{m}|mu_t(1,t)\\
 \label{P0} &-mq_t(1,t)+a[v_x(1,t)-q_x(1,t)]|^2<\infty,\\
   \nonumber &\sup_{t\geq 0} \bigg[\int_0^1[|v_t(x,t)|^2+|v_{x}(x,t)|^2+|q_t(x,t)|^2+|q_{x}(x,\\
   \nonumber&t)|^2]dx+\beta(|v(0,t)|^2+|q(0,t)|^2)+|mv_t(1,t)-mq_t(1,\\
\label{P2}&t)+a[v_x(1,t)-q_x(1,t)]|^2)\bigg]<\infty,
\end{align}
where $M_2$ and $\gamma_2$ are two positive constants.

If $f(0,0)=0$ and $d\in L^2(0,\infty)$, then
$\lim_{t\rightarrow +\infty} \int_0^1(|u_t(x,$
$t)|^2+|u_{x}(x,t)|^2+\frac{1}{m}|mu_t(1,t)-$
$mq_t(1,t)+a(v_x(1,t)-q_x(1,t))|^2=0,
\lim_{t\rightarrow +\infty} \big[\int_0^1(|v_t(x,$
$t)|^2+|v_{x}(x,t)|^2+|q_t(x,t)|^2
+|q_{x}(x,t)|^2)dx+\beta(v(0,t)|^2$
$+|q(0,t)|^2)+|mv_t(1,t)
-mq_t(1,t)+a[v_x(1,t)-q_x(1,t)|^2)\big]$
$=0.$
\end{theorem}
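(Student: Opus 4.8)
The plan is to reduce the closed-loop system \eqref{perror110} to the decoupled cascade \eqref{perror110closed}, both of whose blocks are already understood: the $(u,\widehat q)$-block is system \eqref{wzwanclosed}, governed by the exponentially stable semigroup $e^{\mathcal{A}_1 t}$ of Theorem \ref{exponential00}, and the $\widehat v$-block is \eqref{perror}, governed up to the forcing $-\mathbf{B}_2F(t)$ by $e^{\mathbf{A}_2 t}$ and controlled by Lemma \ref{admissible}. Given data in $\mathfrak{X}$ with the compatibility $q(1,0)=v(1,0)-u(1,0)$, I would set $\widehat v(\cdot,0)=v(\cdot,0)-u(\cdot,0)$ and $\widehat q(\cdot,0)=q(\cdot,0)-v(\cdot,0)+u(\cdot,0)$; the compatibility forces $\widehat q(1,0)=0$, so $(\widehat q(\cdot,0),\widehat q_t(\cdot,0))\in\mathbb{H}$, and, taking the sixth coordinate of the prescribed $\mathfrak{X}$-datum as the $\mathbf{H}$-part ODE value, we obtain $X(0):=(u(\cdot,0),u_t(\cdot,0),mv_t(1,0)-mq_t(1,0)+a[v_x(1,0)-q_x(1,0)],\widehat q(\cdot,0),\widehat q_t(\cdot,0))\in\mathcal{X}_1$, while the $\mathbf{H}_2$-datum for $\widehat v$ has third coordinate equal to the difference of the sixth and third $\mathfrak{X}$-coordinates. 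The crucial structural fact, which I would verify by a one-line boundary computation at $x=1$ using $\widehat v_x(1,t)+m\widehat v_{tt}(1,t)=-F(t)$ and $\widehat q(1,t)\equiv0$, is that the $u$-boundary condition of \eqref{perror110} collapses to the disturbance-free condition of \eqref{wzwanclosed}: the forcing $F$ cancels and the $(u,\widehat q)$-block does not see $\widehat v$. Hence I take $e^{\mathcal{A}_1 t}X(0)$ for $(u,u_t,\cdot,\widehat q,\widehat q_t)(t)$, form $F(t)=f(u(\cdot,t),u_t(\cdot,t))+d(t)$, solve \eqref{perror} for $\widehat v$, and recover $v=u+\widehat v$, $q=\widehat q+\widehat v$ together with the ODE coordinates. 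Since the reconstruction maps are bounded (point traces at $x=0,1$ are bounded on $H^1(0,1)$), this produces a solution in $C(0,\infty;\mathfrak{X})$ with $q(1,t)=\widehat q(1,t)+\widehat v(1,t)=v(1,t)-u(1,t)$ for all $t\ge0$; uniqueness follows by applying the same substitution to an arbitrary solution and using uniqueness for \eqref{wzwanclosed} and \eqref{perror}.

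Next I read off the estimates. By Theorem \ref{exponential00}, $\|e^{\mathcal{A}_1 t}X(0)\|_{\mathcal{X}_1}\le M_{\mathcal{A}_1}e^{\omega_{\mathcal{A}_1}t}\|X(0)\|_{\mathcal{X}_1}$ with $\omega_{\mathcal{A}_1}<0$. Since $v_x(1,t)-q_x(1,t)=u_x(1,t)-\widehat q_x(1,t)$, the combination $mu_t(1,t)+a[v_x(1,t)-q_x(1,t)]$ equals the third ($\mathbf{H}$-ODE) coordinate $au_x(1,t)+mu_t(1,t)-a\widehat q_x(1,t)$ of $e^{\mathcal{A}_1 t}X(0)$, and the first two coordinates dominate $\int_0^1(|u_t(x,t)|^2+|u_x(x,t)|^2)\,dx$; thus \eqref{P1} holds with $\gamma_2=-2\omega_{\mathcal{A}_1}$ and $M_2$ a fixed multiple of $M_{\mathcal{A}_1}^2\|X(0)\|_{\mathcal{X}_1}^2$. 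In particular $(u(\cdot,t),u_t(\cdot,t))$ is bounded in $H^1(0,1)\times L^2(0,1)$ (using $u(0,t)=0$ and Poincar\'e's inequality), so $F\in L^\infty(0,\infty)$ (resp.\ $L^2(0,\infty)$) by continuity of $f$, and Lemma \ref{admissible} gives $(\widehat v(\cdot,t),\widehat v_t(\cdot,t),m\widehat v_t(1,t))\in C(0,\infty;\mathbf{H}_2)$ with $\sup_{t\ge0}\|(\widehat v(\cdot,t),\widehat v_t(\cdot,t),m\widehat v_t(1,t))\|_{\mathbf{H}_2}<\infty$, while $(\widehat q(\cdot,t),\widehat q_t(\cdot,t))\to0$ exponentially in $\mathbb{H}$. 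Writing $v=u+\widehat v$, $q=\widehat q+\widehat v$ and using $v_x-q_x=u_x-\widehat q_x$, $v_t-q_t=u_t-\widehat q_t$, $mv_t(1,t)-mq_t(1,t)+a[v_x(1,t)-q_x(1,t)]=au_x(1,t)+mu_t(1,t)-a\widehat q_x(1,t)$, $mu_t(1,t)-mq_t(1,t)+a[v_x(1,t)-q_x(1,t)]=[au_x(1,t)+mu_t(1,t)-a\widehat q_x(1,t)]-m\widehat v_t(1,t)$, $|\widehat v(0,t)|\le C\|(\widehat v(\cdot,t),\widehat v_t(\cdot,t),m\widehat v_t(1,t))\|_{\mathbf{H}_2}$ and $|\widehat q(0,t)|\le\|\widehat q_x(\cdot,t)\|_{L^2(0,1)}$, the bounds \eqref{P0} and \eqref{P2} follow term by term: each term is either exponentially small (the $u$- and $\widehat q$-contributions) or bounded (the $\widehat v$-contributions).

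For the final statement, assume $f(0,0)=0$ and $d\in L^2(0,\infty)$. Then $\|e^{\mathcal{A}_1 t}X(0)\|_{\mathcal{X}_1}\to0$, so $(u(\cdot,t),u_t(\cdot,t))\to(0,0)$ in $H^1(0,1)\times L^2(0,1)$, and continuity of $f$ gives $f(u(\cdot,t),u_t(\cdot,t))\to f(0,0)=0$; combined with $d\in L^2(0,\infty)$, the convergence part of Lemma \ref{admissible} yields $\|(\widehat v(\cdot,t),\widehat v_t(\cdot,t),m\widehat v_t(1,t))\|_{\mathbf{H}_2}\to0$. Since $(\widehat q(\cdot,t),\widehat q_t(\cdot,t))\to0$ as well, $v=u+\widehat v\to0$ and $q=\widehat q+\widehat v\to0$, and the two limit identities in the statement follow from the same bookkeeping identities.

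The only genuine obstacle I anticipate is the first two paragraphs: fixing the change of variables, verifying the cancellation of $F$ that makes the $(u,\widehat q)$-block autonomous, and matching the boundary-ODE coordinates of $\mathfrak{X}$ with those of $\mathcal{X}_1$ and $\mathbf{H}_2$ so that each term of \eqref{P1}--\eqref{P2} is dominated by an already-established quantity. Everything after that is a direct appeal to Theorem \ref{exponential00}, Lemma \ref{admissible}, and the exponential stability of $e^{\mathbb{A} t}$.
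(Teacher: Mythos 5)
Your proposal is correct and follows essentially the same route as the paper: transform \dref{perror110} into the cascade \dref{perror110closed}, apply Theorem \ref{exponential00} to the autonomous $(u,\widehat q)$-block to get \dref{P1}, feed the resulting $F\in L^\infty$ (or $L^2$) into Lemma \ref{admissible} for the $\widehat v$-block, and reassemble $v=u+\widehat v$, $q=\widehat q+\widehat v$ to obtain \dref{P0}, \dref{P2} and the limits. Your bookkeeping of the boundary-ODE coordinates (sixth $\mathfrak{X}$-coordinate as the $\mathcal{X}_1$ ODE value, sixth minus third as $m\widehat v_t(1,0)$) and the cancellation of $F$ at $x=1$ match the paper's computation exactly.
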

\begin{proof}\ \
Given initial value $(u(\cdot,0),u_t(\cdot,0),mu_t(1,0)$
$-mq_t(1,0)+a(v_x(1,0)-q_x(1,0)),v(\cdot,0),v_t(\cdot,0),$
$mv_t(1,0)-mq_t(1,0)+a[v_x(1,0)-q_x(1,0)],q(\cdot,0),q_t(\cdot,0)) \in \mathbf{H}_1\times \mathbf{H}_2\times \mathbb{H}_1$ with $q(1,0)=v(1,0)-u(1,0)$, we have $(u(\cdot,0),u_t(\cdot,0),mu_t(1,0)+a[u_t(1,0)-\widehat{q}_t(1,0)],\widehat{q}(1,0),$
$\widehat{q}_t(1,0))\in \mathcal{X}_1$ and $\big(\widehat{v}(\cdot,0),\widehat{v}_t(\cdot,$
$0),m\widehat{v}_t(1,0)\big)\in \mathbf{H}_2$.
By Theorem \ref{exponential00}, we obtain $(u(\cdot,t),u_t(\cdot,t),mu_t(1,t)+a[u_t(1,t)-\widehat{q}_t(1,t)],\widehat{q}(1,t),$
$\widehat{q}_t(1,t))\in C(0,\infty;\mathcal{X}_1), q(1,t)$
$=v(1,t)-u(1,t)$ and
\begin{align*}
    &\int_0^1(|u_t(x,t)|^2+|u_{x}(x,t)|^2)dx+|mu_t(1,t)+a(v_x(1,t)\\
  &-q_x(1,t))|^2=\int_0^1(|u_t(x,t)|^2+|u_{x}(x,t)|^2)dx\\
  &+|mu_t(1,t)+a(u_x(1,t)-\widehat{q}_x(1,t))|^2\\
&\leq M^2_{\mathcal{A}_1}e^{-2\omega_{\mathcal{A}_1}t}\|(u(\cdot,0),u_t(\cdot,0),mu_t(1,0)\\
&+a(u_x(1,0)-\widehat{q}_x(1,0)),\widehat{q}(\cdot,0),\widehat{q}_t(\cdot,0)) \|^2_{\mathcal{X}_1}= M^2e^{-\gamma_2 t},
\end{align*}
where $\gamma_2=2\omega_{\mathcal{A}_1}$ and $M_2=M^2_{\mathcal{A}_1}
\|(u(\cdot,0),u_t(\cdot,0),mu_t(1,$
$0)+a(u_x(1,0)-\widehat{q}_x(1,0)),\widehat{q}(\cdot,0),\widehat{q}_t(\cdot,0)) \|^2_{\mathcal{X}_1}$
$=M^2_{\mathcal{A}_1}\|(u(\cdot,$
$0),u_t(\cdot,0),mu_t(1,0)+a(v_x(1,0)-q_x(1,0)),q(\cdot,0)-v(\cdot,0)+u(\cdot,0),q_t(\cdot,0)-v_t(\cdot,0)+u_t(\cdot,0)) \|^2_{\mathcal{X}_1}.$

Therefore, we derive the continuity and exponential stability of $(u(\cdot,t),u_t(\cdot,t))$ on $H^1(0,1)\times L^2(0,1)$.
This, together with the continuity of $f$ indicates that $f(w(\cdot,t),w_t(\cdot,t))\in L^\infty(0,\infty)$.
It follows from Lemma \ref{admissible} that $\big(\widehat{v}(\cdot,t),\widehat{v}_t(\cdot,t),m\widehat{v}_t(1,t)\big)\in C(0,\infty;\mathbf{H}_2)$
and $\sup_{t\geq 0}\|\big(\widehat{v}(\cdot,t),\widehat{v}_t(\cdot,t),m\widehat{v}_t(1,$
$t)\big)\|_{\mathbf{H}_2}<+\infty$. Hence,
$mu_t(1,t)-mq_t(1,t)+a(v_x(1,t)-q_x(1,t))
=mu_t(1,t)$
$+a[u_t(1,t)-q_t(1,t)]-m\widehat{v}_t(1,t)\in C(0,\infty,\mathds{C}),$
 $mv_t(1,t)$
 $-mq_t(1,t)+a(v_x(1,t)-q_x(1,t))=mu_t(1,t)+a(v_x(1,t)-q_x(1,t))\in C(0,\infty,\mathds{C}),$
 $\sup_{t\geq 0}|mu_t(1,t)-$
 $mq_t(1,t)+a(v_x(1,t)-q_x(1,t))|\leq \sup_{t\geq 0}|mu_t(1,t)+a[u_t(1,t)-\widehat{q}_t(1,t)]|$
 $+\sup_{t\geq 0}|m\widehat{v}_t(1,t)|<\infty,$
  $\sup_{t\geq 0}| m$
  $v_t(1,t)-mq_t(1,t)+a(v_x(1,t)-q_x(1,t))|=\sup_{t\geq 0}|mu_t(1,$
  $t)+a(v_x(1,t)-q_x(1,t))|<\infty,$
\begin{align*}
   & \int_0^1[|v_t(x,t)|^2+|v_{x}(x,t)|^2]dx+\beta|v(0,t)|^2\leq 2\int_0^1[|\widehat{v}_t(x,\\
&t)|^2+|\widehat{v}_{x}(x,t)|^2]dx+\beta|\widehat{v}(0,t)|^2+2\int_0^1[|u_t(x,t)|^2\\
&+|u_{x}(x,t)|^2]dx\leq 2\sup_{t\geq 0}\bigg[\|\big(\widehat{v}(\cdot,t),\widehat{v}_t(\cdot,t),m\widehat{v}_t(1,t)\big)\|^2_{\mathbf{H}_2}
\\
&+2\|(u(\cdot,t),u_t(\cdot,t),mu_t(1,t)+a(u_x(1,t)-\widehat{q}_x(1,t)),\\
&\widehat{q}(\cdot,t),\widehat{q}_t(\cdot,t)) \|^2_{\mathcal{X}_1}\bigg]<\infty,\\
   & \int_0^1[|q_t(x,t)|^2+|q_{x}(x,t)|^2]dx+\beta|q(0,t)|^2\leq 2\int_0^1[|\widehat{q}_t(x,\\
&t)|^2+|\widehat{q}_{x}(x,t)|^2]dx+2\beta|\widehat{q}(0,t)|^2+2\int_0^1[|\widehat{v}_t(x,t)|^2\\
&+|\widehat{v}_{xx}(x,t)|^2]dx+2\beta|\widehat{v}(0,t)|^2\leq 2\int_0^1[|\widehat{q}_t(x,t)|^2\\
&+|\widehat{q}_{x}(x,t)|^2]dx+2\beta\int_0^1|\widehat{q}_{x}(x,t)|^2dx+2\int_0^1[|\widehat{v}_t(x,t)|^2\\
&+|\widehat{v}_{xx}(x,t)|^2]dx+2\beta|\widehat{v}(0,t)|^2\leq 2(1\\
&+\beta)M^2_{\mathbb{A}}e^{-2\omega_\mathbb{A}}\|(\widehat{q}(\cdot,t),\widehat{q}_t(\cdot,t))\|^2_{\mathbb{H}}+2\int_0^1[|\widehat{v}_t(x,t)|^2\\
&+|\widehat{v}_{xx}(x,t)|^2]dx+2\beta|\widehat{v}(0,t)|^2<\infty,
\end{align*}
where the fact $|\widehat{q}(0,t)|^2=\big|\widehat{q}(1,t)-\int_0^1\widehat{q}_x(x,t)dx\big|^2\leq \int_0^1|\widehat{q}_{x}(x,t)|^2dx$ is used.
Then (\ref{P0}) is derived, system (\ref{perror110}) has a unique solution and $(u(\cdot,t),u_t(\cdot,t),mu_t(1,t)-mq_t(1,t)+a(v_x(1,t)-q_x(1,t)),v(\cdot,t),v_t(\cdot,t),
mv_t(1,t)-mq_t(1,t)+a[v_x(1,t)-q_x(1,t)],q(\cdot,t),q_t(\cdot,t)) \in   C(0,\infty;$
$\mathfrak{X})$ and (\ref{P2}) is derived.

If $f(0,0)=0$, we derive $\lim_{t\rightarrow \infty}f(w(\cdot,t),w_t(\cdot,t))=0$ by the continuity of $f$ and the exponential stability of $(w(\cdot,t),w_t(\cdot,t))$. Moreover, we use the assumption $d\in L^2(0,\infty)$ and \cite[Lemma A.1]{Zhou2018a} to get
$\lim_{t\rightarrow\infty}\|(\widehat{v}(\cdot,t),\widehat{v}_t(\cdot,t),m\widehat{v}_t(1,t))\|_{\mathbb{H}_1}=0.$
Then, the two limits are verified. This completes the proof.
\end{proof}

\begin{remark} \em
The subsystem consisting of $(u,\widehat{q})$-part of
(\ref{wzwanclosed}) is vital to prove Theorem \ref{maintheorem}.
Indeed, by virtue of semigroup theorem, we can derive the continuity and exponential stability of the solution of (\ref{beem})
without the global Lipsichitz condition. Then we obtain that $(u(\cdot,t),u_t(\cdot,t))$ is continuous and bounded on $H^1_E(0,1)\times L^2(0,1)$.
By Lemma \ref{admissible}, the existence and boundedness of $(\widehat{v}(\cdot,t),\widehat{v}_t(\cdot,t),m\widehat{v}_t(1,t))$ are verified by viewing $f(w(\cdot,t),w_t(\cdot,t))+d(t)$ as the boundary input, provided $f$ is continuous. Furthermore,
we not only derive the exponential stability of $(u(\cdot,t),u_t(\cdot,t))$ but also verify the exponential stability of
$\eta(t)=mu_t(1,t)+a(v_t(1,t)-q_t(1,t))$, because $mu_t(1,t)+a(v_t(1,t)-q_t(1,t))$ is the state of boundary dynamic of the exponentially stable system (\ref{perror110closed}). Theorem \ref{maintheorem} also tells us that the other states (including the state of the boundary dynamic) are bounded.
\end{remark}

\begin{remark} \em
In Theorem \ref{maintheorem}, compared to the results in \cite{Xie2017}, the differences and improvements mainly lie in that,
1) the internal uncertainty is taken into consideration, while \cite{Xie2017} just studies the case of $f(w,w_t)=0$,
2) only ``low order'' measurements $w(1,t)$ and $w_{x}(0,t)$ are adopted, while \cite{Xie2017} used the velocity $w_{t}(1,t)$ as well as the high order
angular velocity $w_{xt}(1,t)$, 3) we consider arbitrary $m\neq a,m\neq a\gamma,\gamma\neq1$ while \cite{Xie2017} just solved the special case $m=a\alpha$.
\end{remark}

\section{Numerical simulation}\label{shiyan}

We present in this section some numerical simulations for the closed-loop system (\ref{perror110}).
We use the finite difference scheme and the numerical results are programmed in Matlab.
The time step and the space step are taken as $1/200$ and $1/100$, respectively. We take the nonlinear internal uncertainty $f(u(\cdot,t),u_t(\cdot,t)) = \sin(u(1,t))$ and
the external disturbance $d(t)=\cos(2t)$. Let $m=5$. The parameters and the initial values are chosen as $\alpha=a=2,\beta=\gamma=1.5,
u(x,0)=x^3-3x^2, u_t(x,0)=0, v(x,0)=-2x^3, v_t(x,0)=0, q(x,0)=q_t(x,0)=0.$

Fig.1, Fig.3 and Fig.5 show the displacements $u(x,t)$, $v(x,t)$ and $q(x,t)$ of the closed-loop system (\ref{perror110}), respectively;
while Fig.2, Fig 4 and Fig 6 respectively present the velocities $u_t(x,t)$, $v_t(x,t)$ and $q_t(x,t)$.
Fig.7 and Fig.8 display the boundary sates $\eta(t)=mu_t(1,t)+a[v_x(1,t)-q_x(1,t)]$ and $\psi(t)=mu_t(1,t)-mq_t(1,t)+a[v_x(1,t)-q_x(1,t)]$.
One can see from the simulations that $(u(\cdot,t),u_t(\cdot,t))$ and $\eta(t)$ decays rapidly, while $(v(\cdot,t),v_t(\cdot,t))$, $(q(\cdot,t),q_t(\cdot,t))$ and $\psi(t)$ are bounded.

\begin{figure}
\begin{minipage}[t]{0.45\linewidth}
\centering     
\includegraphics[height=5.5cm,width=7.2cm]{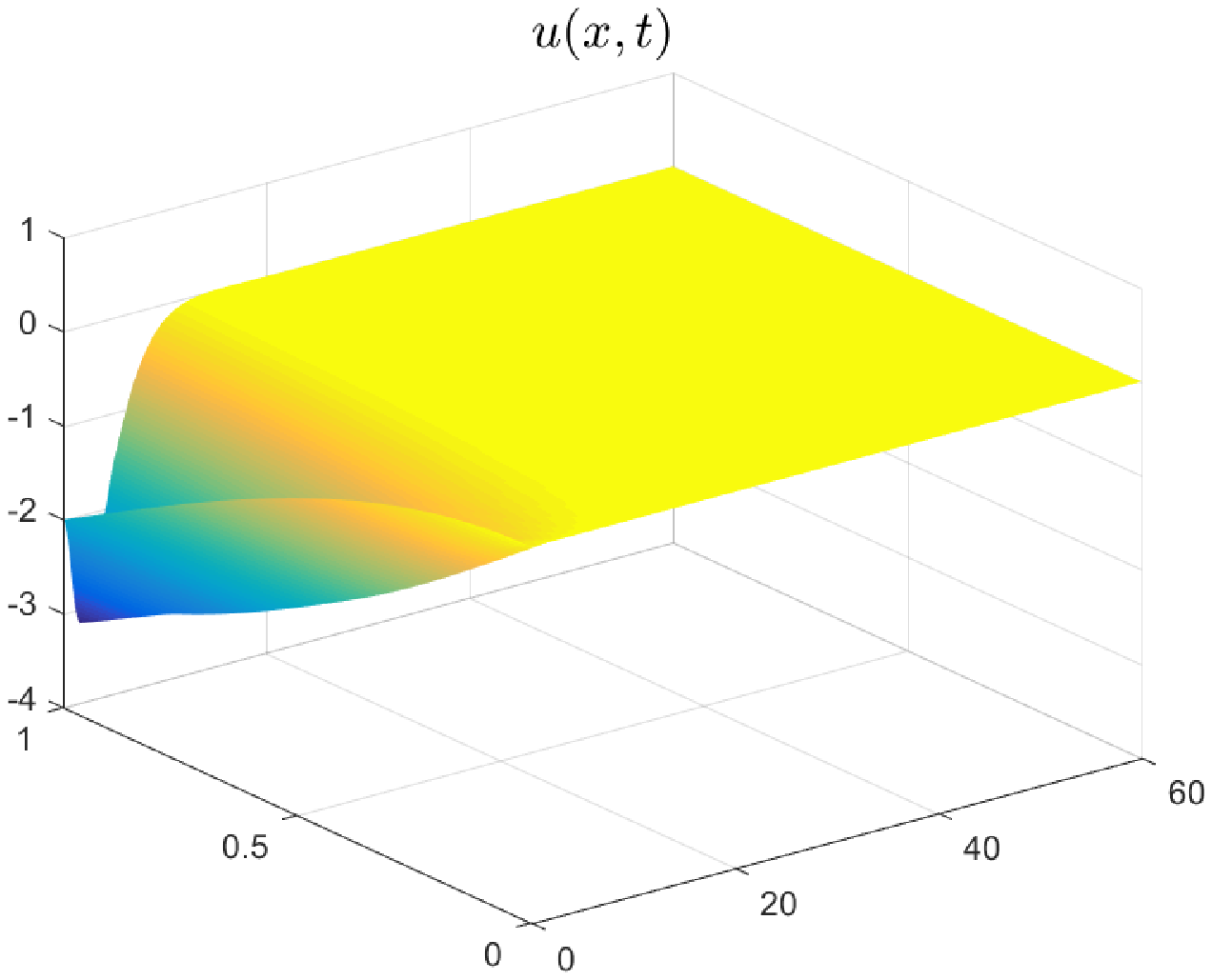}
\caption{The state $u(x,t)$.}
\label{1}
\end{minipage}
\hfill
\begin{minipage}[t]{0.45\linewidth}
\centering
\includegraphics[height=5.5cm,width=7.2cm]{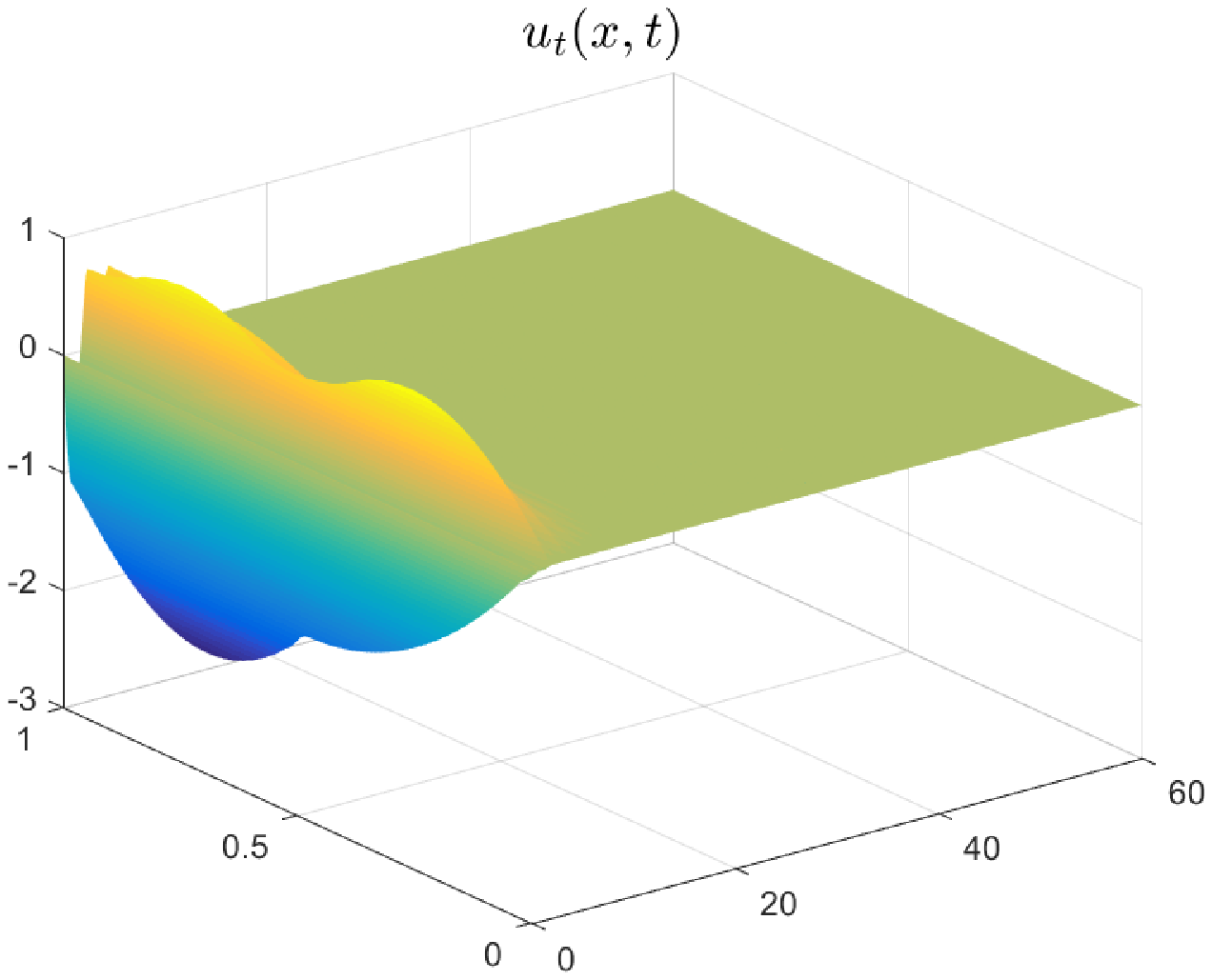}
\caption{The state $u_t(x,t)$.}
\label{2}
\end{minipage}
\end{figure}

\begin{figure}
\begin{minipage}[t]{0.45\linewidth}
\centering     
\includegraphics[height=5.5cm,width=7.2cm]{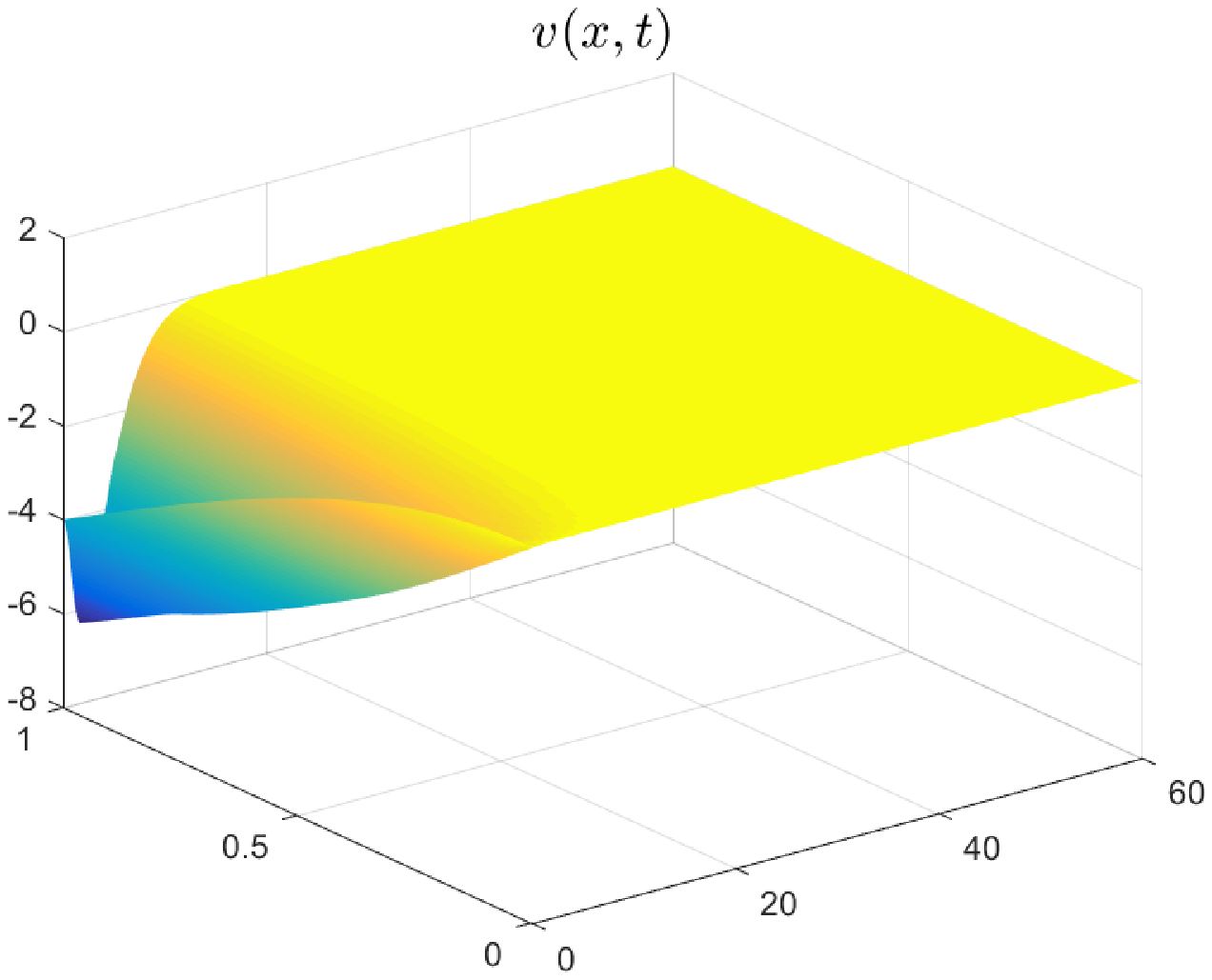}
\caption{The state $v(x,t)$.}
\label{3}
\end{minipage}
\hfill
\begin{minipage}[t]{0.45\linewidth}
\centering
\includegraphics[height=5.5cm,width=7.2cm]{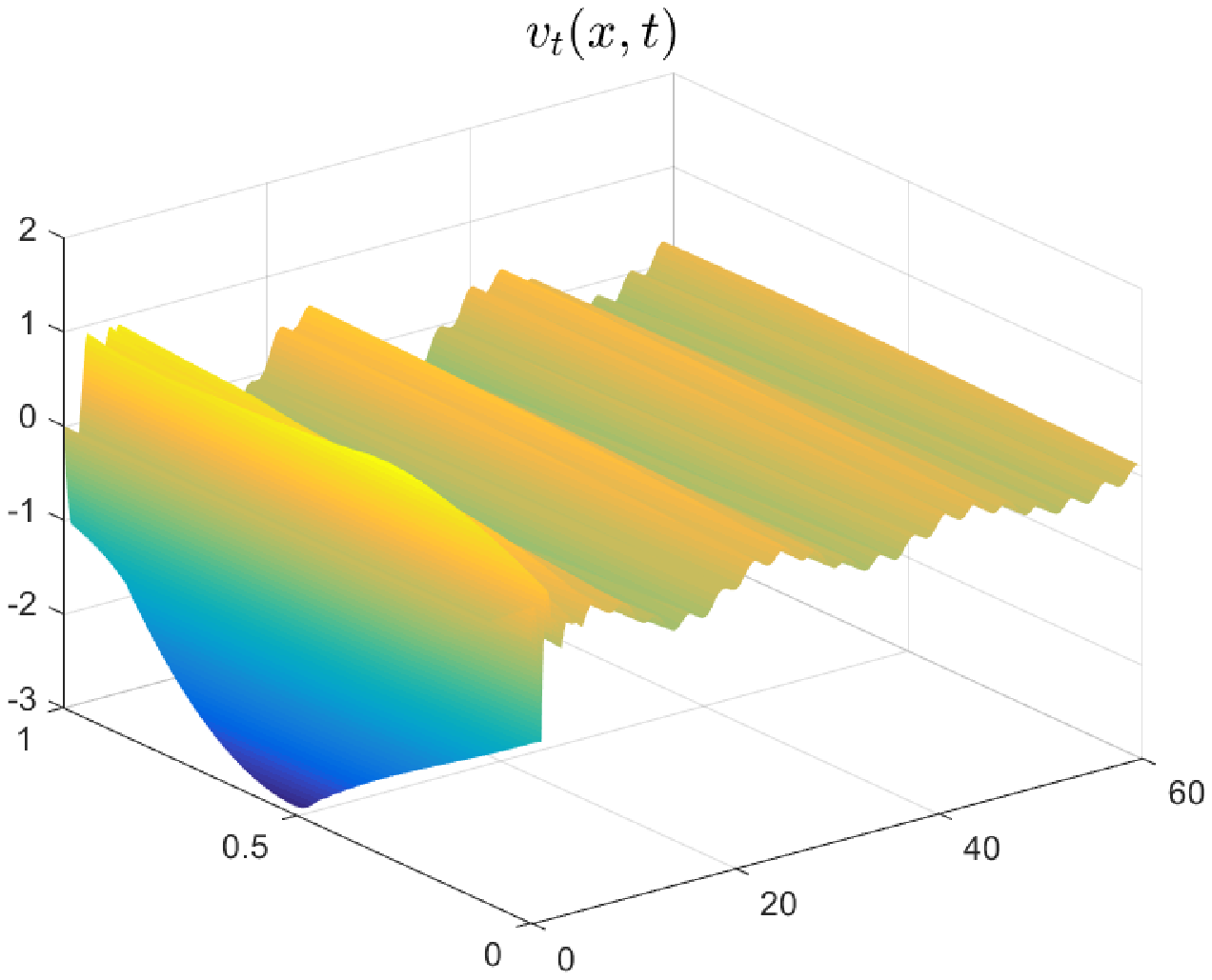}
\caption{The state $v_t(x,t)$.}
\label{4}
\end{minipage}
\end{figure}

\begin{figure}
\begin{minipage}[t]{0.45\linewidth}
\centering     
\includegraphics[height=5.5cm,width=7.2cm]{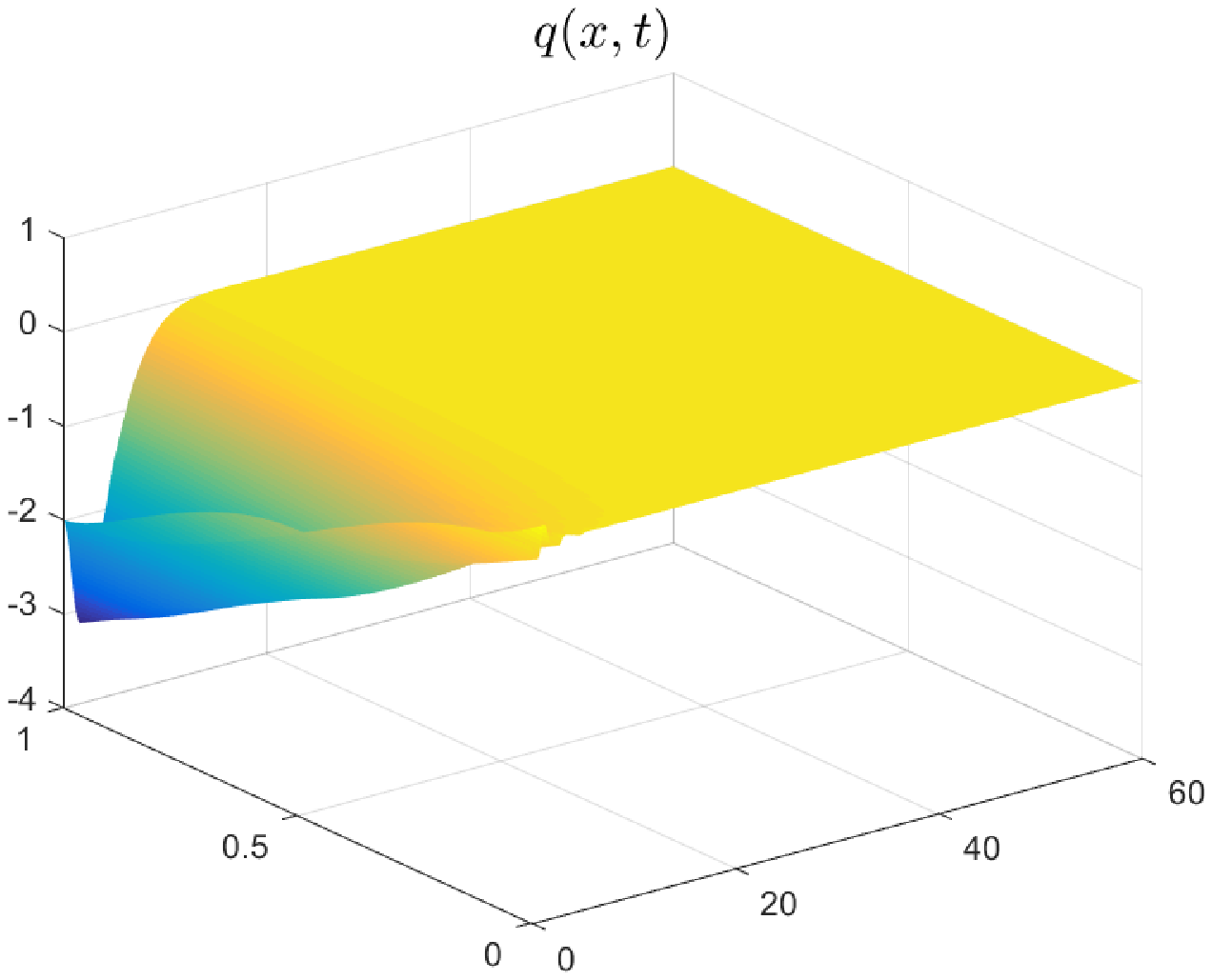}
\caption{The state $q(x,t)$.}
\label{5}
\end{minipage}
\hfill
\begin{minipage}[t]{0.45\linewidth}
\centering
\includegraphics[height=5.5cm,width=7.2cm]{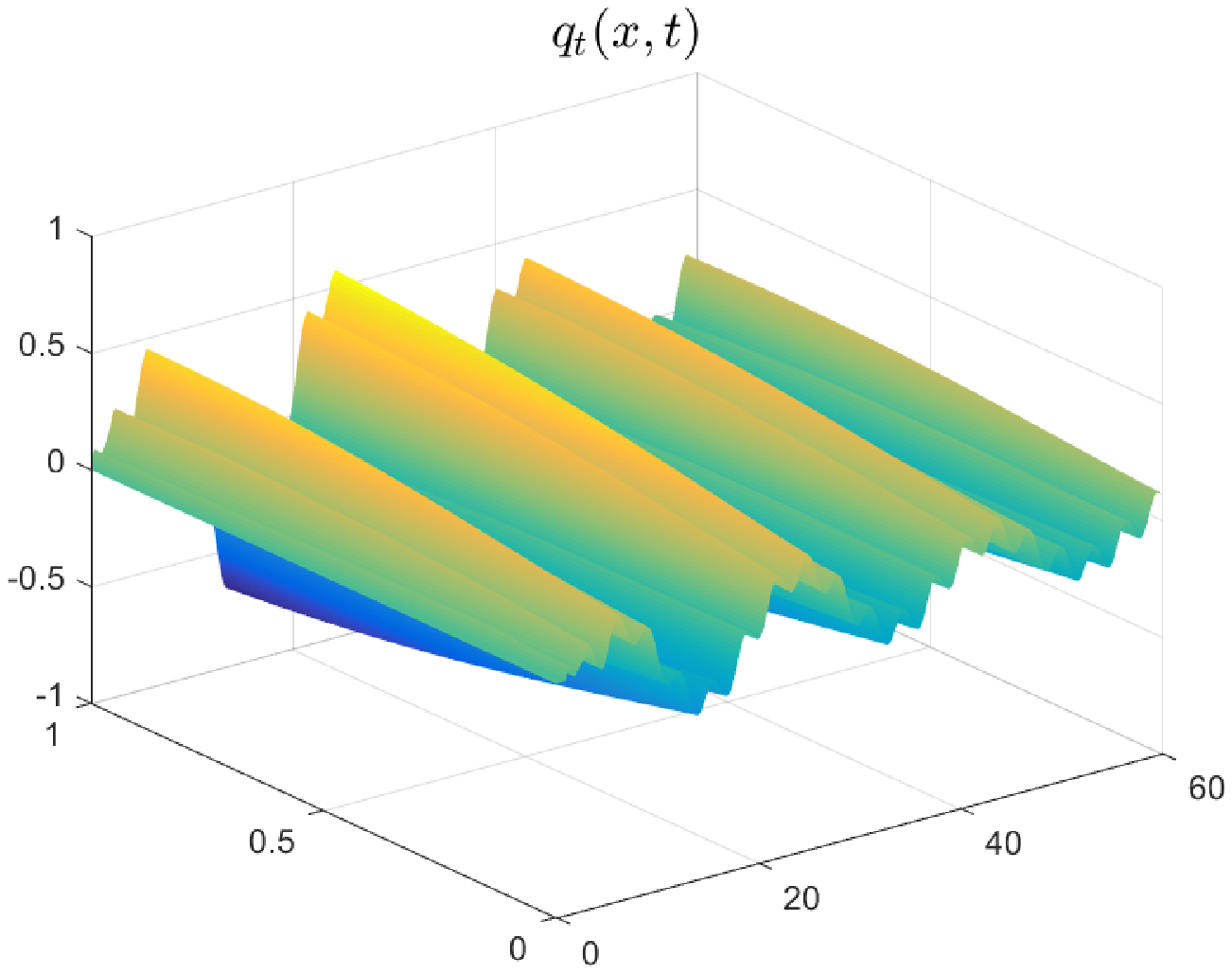}
\caption{The state $q_t(x,t)$.}
\label{6}
\end{minipage}
\end{figure}

\begin{figure}
\begin{minipage}[t]{0.45\linewidth}
\centering     
\includegraphics[height=5.5cm,width=7.2cm]{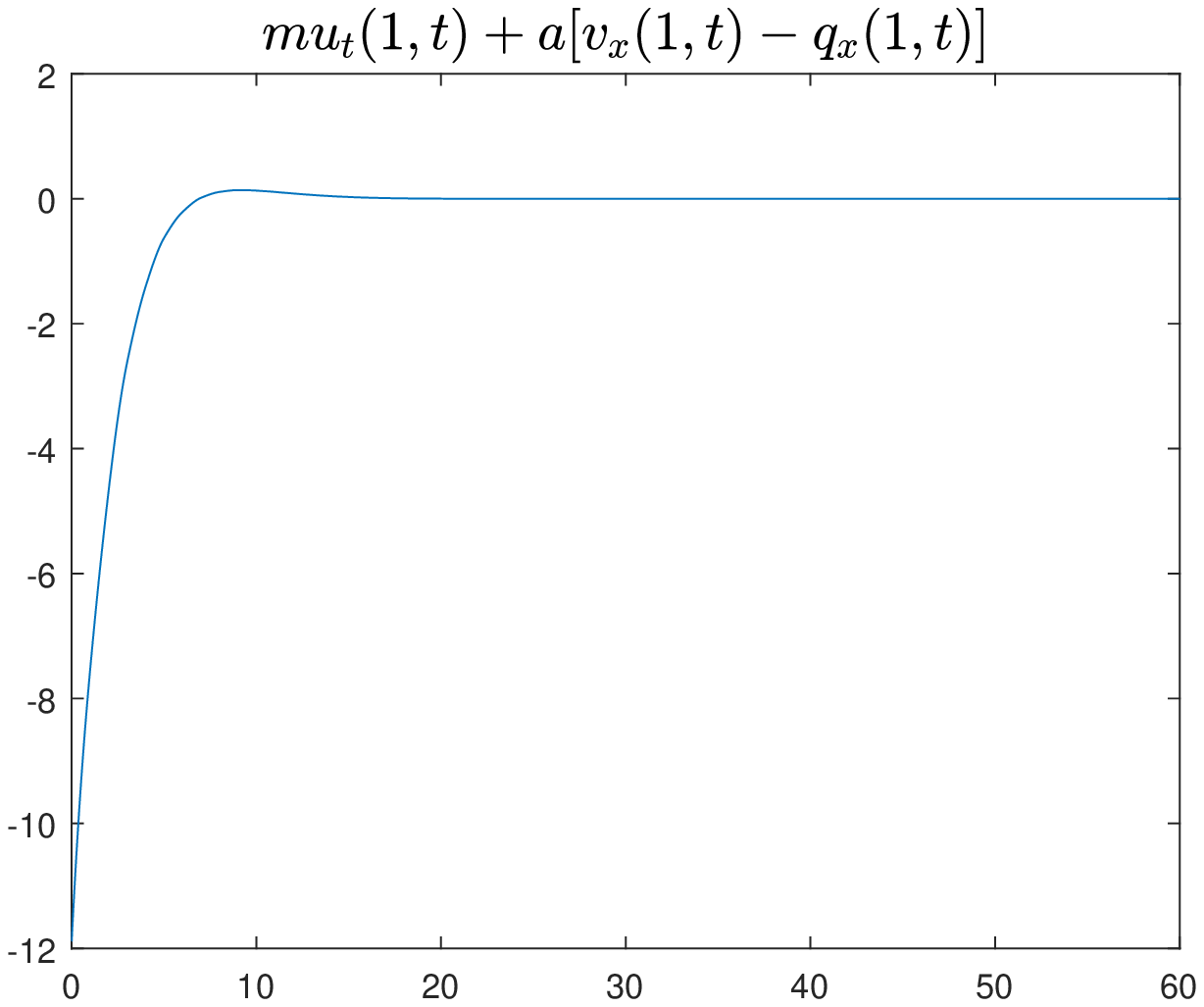}
\caption{The state $\eta(x,t)$}
\label{7}
\end{minipage}
\hfill
\begin{minipage}[t]{0.45\linewidth}
\centering
\includegraphics[height=5.5cm,width=7.2cm]{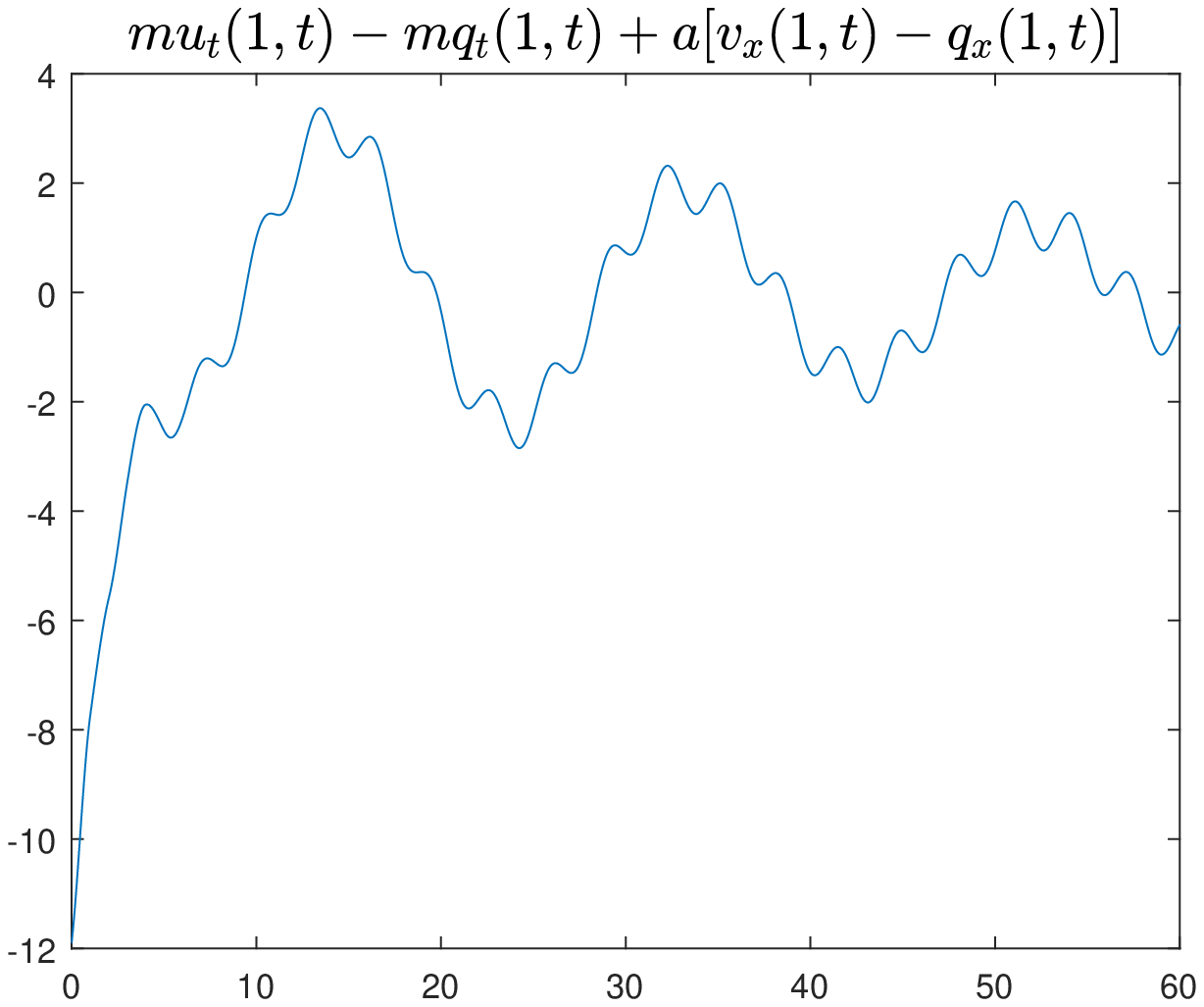}
\caption{The state $\psi(x,t)$}
\label{8}
\end{minipage}
\end{figure}

\section{Concluding  remarks}

In this paper, the output feedback exponential stabilization for a 1-d wave PDE with boundary and with or without disturbance is investigated.
When there is no disturbance, with only one non-collocated measurement $w_x(0,t)$, we design a Luenberger state observer and an estimated state based stabilizing controller. This improves the existence references \cite{Guo2000,Morgul1994} where the authors used two collocated measurements $w_t(1,t)$ and $w_{xt}(1,t)$.
By modifying the proof, we can simplify the proof of the exponential stability of the closed-loop system \cite[(3.1)]{Guo2007} where $m=0$,
because our coupled system (\ref{closednodisturbance1}) contains an independent subsystem.
When the boundary internal uncertainty and external disturbance are considered,
we construct an infinite-dimensional ESO to estimate the original state and total disturbance online.
Then, the estimated state and estimated total disturbance
allows us to design a stabilizing controller while guaranteeing the boundedness of the closed-loop system.
Riesz basis approach is the main tool for the proofs of the exponential stabilities of two coupled systems.

\end{document}